\tikzset{
  treenode/.style = {shape=rectangle, rounded corners,
                     draw, align=center,
                     top color=white, bottom color=blue!20},
  root/.style     = {treenode, font=\Large, bottom color=red!30},
  env/.style      = {treenode, font=\ttfamily\normalsize},
  dummy/.style    = {circle,draw,minimum size = 0.6cm,pattern=crosshatch, pattern color = black!50},
  graph/.style    = {circle,draw,minimum size = 0.6cm,pattern=dots, pattern color = black!25},
  action/.style   = {circle,draw,minimum size = 0.6cm,pattern=crosshatch, pattern color = black!25},
}
\newtheorem{lemma}{Lemma}
\newtheorem{theorem}{Theorem}
\newtheorem{definition}{Definition}
\newtheorem{remark}{Remark}
\newtheorem{proposition}{Proposition}
\newcommand{\lap}[0]{{\mathcal{L}}}
\newcommand{\graph}[0]{{\mathcal{G}}}
\newcommand{\Graph}[0]{{\mathcal{G}}}
\newcommand{\Tr}{\mathrm{Tr}}
\newcommand{\Nodes}{\mathcal{N}}
\newcommand{\Edges}{\mathcal{E}}
\newcommand{\Weights}{\mathcal{W}}
\newcommand{\Htwo}{{\mathcal{H}_2}}
\newcommand{\blk}{{\mathrm{Blk}}}
\newlength\mylen
\newcommand\myinput[1]{%
  \settowidth\mylen{\KwIn{}}%
  \setlength\hangindent{\mylen}%
  \hspace*{\mylen}#1\\}
\newcommand{\removelatexerror}{\let\@latex@error\@gobble}
\renewcommand{\@algocf@capt@plain}{above}
\begin{document}
\title{$\Htwo$ Performance of Series-Parallel Networks:\\A Compositional Perspective}

\author{Mathias Hudoba de Badyn, and
        Mehran Mesbahi 
\thanks{Manuscript received December 11, 2018; revised October 16, 2019.
  The authors are with the William E. Boeing Department of Aeronautics and Astronautics at the University of Washington, Seattle, WA 98195 USA
  MHdB is also with the Automatic Control Laboratory, ETH, Z\"{u}rich, Switzerland.
  e-mails: \texttt{\{hudomath,mesbahi\}@uw.edu}. 
The research of the authors was supported by to following agencies: NSERC (CGSD2-502554-2017), the ARL/ARO (W911NF-13-1-0340), and the AFOSR (FA9550-16-1-0022).
  A preliminary version of this work was presented at the 2019 American Control Conference~\cite{Hudobadebadyn2019}.}%
}

\markboth{Transactions on Automatic Control,~Vol.~xx, No.~x, August~20xx}%
{Hudoba de Badyn \MakeLowercase{\textit{et al.}}: $\mathcal{H}_2$ on Series-Parallel Networks}

\maketitle

\begin{abstract}
We examine the $\mathcal{H}_2$ norm of matrix-weighted leader-follower consensus on series-parallel networks.
By using an extension of electrical network theory on matrix-valued resistances, voltages and currents, we show that the computation of the $\Htwo$ norm can be performed efficiently by decomposing the network into atomic elements and composition rules.
Lastly, we examine the problem of efficiently adapting the matrix-valued edge weights to optimize the $\Htwo$ norm of the network.
\end{abstract}


\IEEEpeerreviewmaketitle

\section{Introduction}

Networked systems are ubiquitous in many disciplines, including robotic swarms~\cite{Li2017}, animal flocking \cite{vicsek1995},  oscillator networks \cite{Matheny2019}.
A widely analyzed algorithm in networked systems is \emph{consensus}, a distributed information-sharing protocol over a network.
Consensus has diverse applications in control and estimation, such as multi-agent systems~\cite{Chen2013a,Saber2003}, distributed Kalman filtering~\cite{Olfati-Saber2005},  and swarm deployment~\cite{Hudobadebadyn2018}.
System-theoretic properties of this algorithm have been extensively examined in the literature, typically using the \emph{graph Laplacian}~\cite{Mesbahi2010}.
This setup  allows one to draw connections between systems aspects of distributed algorithms and the graph structure, e.g., highlighting the role of symmetries in inducing uncontrollable modes~\cite{Rahmani2009a}.
Various performance measures for networks have been examined in the literature, such as entropy and other so-called spectral measures~\cite{HudobaDeBadyn2015a,Siami2017,Mesbahi2010}, as well as disturbance rejection~\cite{Siami2014a,Chapman2013a,Bamieh2012}.
One can use such measures to  modify the topology of the network to adapt to antagonistic influences~\cite{Chapman2013a}.

Most of the existing literature on consensus considers the case where each node in the network has a \emph{scalar} state.
Recently, several extensions to the case of \emph{vector}-valued node states have been proposed; such networks have \emph{matrix-weighted edges}.
Properties of such networks, and their use in formation control were considered in~\cite{Trinh2018a}.
Further applications of matrix-valued weighted graphs are found in the control of coupled oscillators~\cite{Tuna2016}, opinion dynamics~\cite{Parsegov2017}, spacecraft formation control~\cite{Ramirez2009}, and distributed estimation~\cite{Lee2016a,Barooah2008}.

The notions of performance and control of consensus rely on the underlying topology of the network.
As such, it is desirable to devise algorithms for system theoretic analysis and synthesis that are scalable
and modular.
One approach is to take smaller, atomic elements and build large-scale graphs from them; one can then use graph-growing operations that preserve properties such as controllability~\cite{Hudobadebadyn2016,chapman2014cart}.

In this paper, we approach the $\mathcal{H}_2$ performance problem on leader-follower consensus by utilizing the paradigms of matrix-weighted resistor networks and in particular \emph{series-parallel networks}.
Such networks can be decomposed into smaller networks via simple operations in sublinear time~\cite{Eppstein1992}.
Many NP-hard problems on general classes of graphs become linear on series-parallel graphs, such as finding maximum matchings induced subgraphs and independent sets, and the maximum disjoint triangle problem~\cite{Takamizawa1986}.
This has been exploited in a number of other disciplines.
Efficient algorithms have been derived for the Quadratic Assignment Problem~\cite{cela2013quadratic}, the discrete time-cost tradeoff problem~\cite{De1997}, and resource allocation by dynamic programming~\cite{Elmaghraby1993}.

The contributions of the paper are as follows.
Using the matrix-valued resistance extension of electrical networks, we show that the analogous notion of effective resistance is related to the $\Htwo$ norm on a leader-follower consensus network with vector states.
By exploiting the decomposability of series-parallel networks, we present a way of computing the $\Htwo$ norm of a leader-follower consensus network in best-case $\mathcal{O}(k^\omega|\mathcal{R}|\log|\Nodes|)$ (worst case $\mathcal{O}(k^\omega|\mathcal{R}||\Nodes|)$) complexity, where $|\mathcal{R}|, |\Nodes|$ are the number of leaders and followers, respectively, and $\mathcal{O}(k^\omega)$ is the complexity of inverting a $k\times k$ symmetric positive-definite matrix; the current best lower bound for $\omega$ is 2.3728639~\cite{Gall2014}.
We also provide a gradient descent method for adaptively re-weighting the network to optimize $\Htwo$ performance that utilizes computations of similar complexity by again using the decomposition of series-parallel networks.
The outline of the paper is as follows: 
in \S\ref{sec:preliminaries}, the notation, preliminaries and problem setup are presented.
Our main results on $\Htwo$ computation/adaptive re-weighting are in \S\ref{sec:syst-theor-comp}, with conclusions in \S\ref{sec:conclusion}.

\section{Preliminaries \& Setup}
\label{sec:preliminaries}
\subsection{Mathematical Preliminaries}
For a matrix $A$, we respectively denote its inverse, pseudoinverse, transpose and conjugate transpose as $A^{-1}, A^\dag, A^\intercal, A^*$.
A \emph{graph} $\Graph$ is a triple of sets $(\Nodes,\Edges,\Weights)$, where $\Nodes$ is a set of \emph{nodes}, $\Edges\subseteq \Nodes^2$ is a set of \emph{edges} denoting pairwise connections between nodes, and $\Weights = \{W_{uv} \in \mathcal{S}_{++}^{k}~:~\{u,v\}\in\Edges\}$ is a set of matrix-valued \emph{weights} on the edges, where $ \mathcal{S}_{++}^{k\times k}$ denotes the set of $k\times k$ symmetric positive-definite matrices with real entries.
A graph is called \emph{directed} if the edge $\{i,j\} \neq \{j,i\}$, and for edge $\{i,j\}$ the node $i$ ($j$) is called the \emph{head} (\emph{tail}) of $\{i,j\}$.
$\Graph$ is \emph{undirected} if for all $\{i,j\}\in\Edges$, $\{i,j\} = \{j,i\}$.
A graph is called \emph{simple} if it is undirected, and the weights are all 1 (or identity).
{To \emph{identify the nodes} $\{i_1,i_2,...,i_r\} \triangleq \mathcal{I}\subseteq \mathcal{N}$ is to define an equivalence relation `$\sim$' such that $i_k\sim i_j$ for all $i_k,i_j \in \mathcal{I}$.
Then, the resulting graph $\mathcal{G}' = (\mathcal{N}', \mathcal{E}')$ is one whose nodes are the equivalence classes induced by `$\sim$':
$  
    \mathcal{N}' = \big\{[i] : i \in \mathcal{N} \big\} = \big\{ \{j \in \mathcal{N} : i \sim j\} : i \in \mathcal{N}\big\},
$
and the edge set $\{[i],[j]\} \in \mathcal{E}'$ for all $\{i,j\} \in \mathcal{E}$.
Given two disjoint graphs $\Graph_1,\Graph_2$, one may combine them through this operation in the following way.
Suppose $s_1\in\Nodes_1,s_2\in\Nodes_2$.
Then we write $\Graph \leftarrow s_1\sim s_2$ to indicate that $\Graph$ is obtained from $\Graph_1,\Graph_2$ by identifying $s_1$ and $s_2$.}

The \emph{incidence matrix} $E$ is a $|\Nodes|\times|\Edges|$ matrix, where each column of $E$ corresponding to an edge $\{i,j\}$ is denoted by $a_{ij}$.
For each edge $l:=\{i,j\}$, where $i$ is the tail and $j$ is the head, $E_{il} = 1$ and $E_{jl}=-1$.
If $\graph$ is undirected, by convention we write that $E_{il} = 1$ and $E_{jl}=-1$ for $i>j$.
Since we are dealing with matrix-valued weights, for defining the graph Laplacian below, we need the matrices $\mathbf{E}$ $ \triangleq E\otimes I_k$ and $\mathbf{A}_{ij} \triangleq a_{ij}\otimes I_k$, where $I_k$ is the $k\times k$ identity matrix, and $\otimes$ denotes the Kronecker product. %
The \emph{weight matrix} $\bf W$ is a $k|\Edges|\times k|\Edges|$ blockwise diagonal matrix containing the weights $W_{ij}$ of each edge $e$.
The \emph{graph Laplacian} $\lap$ of an undirected graph $\graph$  can be defined by the incidence and weight matrix as 
$  \lap \triangleq \mathbf{E}\mathbf{W}\mathbf{E}^{\intercal}  = \sum_{ij\in\Edges} \mathbf{A}_{ij}W_{ij} \mathbf{A}_{ij}^{\intercal} $.

A \emph{tree} is a connected graph with no cycles, and a \emph{leaf} is designated as a node of degree 1.
A \emph{binary tree} is a tree where one node is designated as the \emph{root}, and all nodes of $\mathcal{T}$ are either leaves or \emph{parents}.
Each parent in a binary tree has one parent and at most two \emph{children}, except the root which has no parent.
The \emph{height} $h$ of a binary tree is the length of the longest path from the root to a leaf.
A \emph{complete} (sometimes called \emph{full}) binary tree is one where each node has either zero or two children.
Finally, the \emph{parallel addition} of two symmetric matrices $A,B$ is defined as $A:B \triangleq A(A+B)^\dag B$.

\subsection{Problem Setup}
\label{sec:problem-statement}

{
In this paper, we examine the $\mathcal{H}_2$ performance of leader-follower consensus problem on matrix-valued weighted series-parallel networks.
Consider a connected weighted graph $\mathcal{G} = (\Nodes,\Edges,\Weights)$ with the Laplacian $\lap$.
Each node $i$ has a state $x_i\in \mathbb{R}^k$.
Denote a set of \emph{leaders} $\mathcal{R}\subset \Nodes$ and a set of \emph{followers} $\Nodes\setminus\mathcal{R}$.
Suppose that one is able to take over the state $x_i\in \mathbb{R}^k$ of a leader, and thereby exert control over the followers.
Further, suppose that each leader is connected by an edge with identity weight to a \emph{unique} node in $\Nodes\setminus\mathcal{R}$, that collectively will be called the \emph{source} nodes and designated as the set $R$ (see Figure~\ref{fig:leaderFollower} for a schematic of the setup).
Then, using $\mathbf{B} := B\otimes I_k$, the graph Laplacian of $\graph$ can be partitioned as,
\begin{figure}
  \centering
  \raisebox{-.5\height}{%
    \includegraphics[width=0.35\columnwidth]{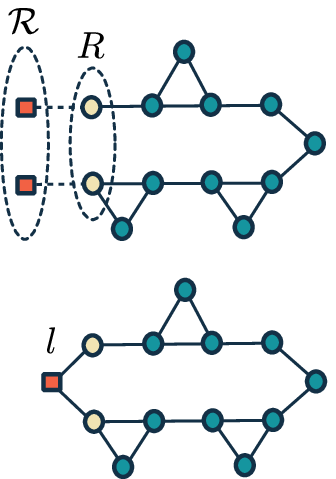}%
  }\qquad
  \raisebox{-.5\height}{%
    \includegraphics[width=0.35\columnwidth]{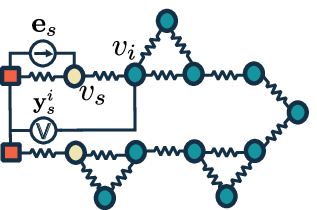}%
  }
  \caption{\textbf{Top:} leader-follower network setup. \textbf{Right:} electrical ``grounding'' of the leader set $\mathcal{R}$, and current vector $\mathbf{e}_s = e_s\otimes I_k$ injected into the network via node $v_s$; $\mathbf{y}_s^i$ is the voltage dropped from $v_i$ to $\mathcal{R}$. \textbf{Bottom:} Identification of grounded leader set into a single node.}
  \label{fig:leaderFollower}
\end{figure}
\begin{align}
  \lap = \left[
  \begin{array}{c|c}
    \mathcal{L}_{\mathcal{R}}  & -\mathbf{B} \\\hline
    -{\mathbf{B}}^{\intercal} & \mathcal{L}_{\mathcal{G}(\Nodes\setminus\mathcal{R})} + \sum_{i\in {R}} \mathbf{e}_iW_i\mathbf{e}_i^{\intercal}
  \end{array}\right],\label{eq:4}
\end{align}
where $\mathbf{e}_i\triangleq e_i \otimes I_k$.
The control matrix is given by $\mathbf{B}^{\intercal}  = [e_{i_1}~\dots~e_{i_m}]$ where $R = \{i_1,\dots,i_m\}$ are the nodes attached to leaders.
The graph Laplacian is written as,
\begin{align}
 \mathcal{L}_{\mathcal{G}(\Nodes\setminus\mathcal{R})} &= \mathbf{E}_{\Nodes\setminus\cal R}\mathbf{W}\mathbf{E}^{\intercal} _{\Nodes\setminus\cal R}  = \sum_{\{i,j\}\in\Edges(\Nodes\setminus\mathcal{R})} \mathbf{A}_{ij}W_{ij} \mathbf{A}_{ij}^{\intercal} ,
\end{align}
where $\mathbf{E}_{\Nodes\setminus\cal R} = E_{\Nodes\setminus\cal R} \otimes I_k$ and $E_{\Nodes\setminus\cal R}$ is the result of removing the rows from $E$ corresponding to the nodes in the leader set $\cal R$.
The matrix $\mathbf{W}=\mathrm{Blkdiag}[W_e]$ denotes the matrix consisting of the positive-definite weights $W_e$ on the block-diagonal.
The \emph{corresponding leader-follower consensus dynamics} are now given by,
\begin{align}
  \dot{\mathbf{x}} &= - \left(  \mathcal{L}_{\mathcal{G}(\Nodes\setminus\mathcal{R})} + \sum_{i\in {R}} \mathbf{e}_iW_i\mathbf{e}_i^{\intercal}  \right)\mathbf{x} + \mathbf{B}^{\intercal} u, \label{eq:A}
\end{align}
where $\mathbf{x}$ is the vector containing the stacked states of the nodes, and $u$ is the stacked vector of the leader node states--the control inputs to the followers.
We note that the \emph{Dirichlet Laplacian} $A(W) \triangleq (\mathcal{L}_{\mathcal{G}(\Nodes\setminus\mathcal{R})} + \sum_{i\in {R}} \mathbf{e}_iW_i\mathbf{e}_i^{\intercal} )$ is positive definite if $\mathcal{G}$ is connected.
}

{
  For a linear system with measurement $y=Cx$ and transfer function $G(s) = C(sI-A)^{-1}B$, the $\mathcal{H}_2$ norm is defined as, 
\begin{align}
  \|G(s)\|_2^2 = \dfrac{1}{2\pi} \int_{-\infty}^\infty \Tr \left[G(j\omega)^* G(j\omega) \right] d\omega.
\end{align}
The $\mathcal{H}_2$ norm is a measure of the input-output energy excitation of the system.
If we want to examine the holistic response of the system, we can measure all of its internal states; as such, we subsequently assume that $C = I$.
In the scalar case of \eqref{eq:A} (i.e., $k=1$), the $\Htwo$ norm of this system is given by \cite{Chapman2015},
\begin{align}
 \left(\Htwo^{\graph, B}\right)^2 = \dfrac{1}{2} \Tr \left(B^{\intercal}  A(W)^{-1} B \right).
\end{align}
In the case of matrix-valued edge weights, we have the following proposition. 
\begin{proposition}
  Consider the leader-follower consensus dynamics in~\eqref{eq:A}
  for positive integer $k$.
  Suppose that all nodes are observed, i.e. $y = \mathbf{x}$.
  Then the $\Htwo$ norm is given by 
  \begin{align}
    \left( \Htwo^{\graph, \mathbf{B}} \right)^2 = \dfrac{1}{2} \Tr \left( \mathbf{B}^{\intercal}  A(W)^{-1} \mathbf{B} \right).
  \end{align}
\end{proposition}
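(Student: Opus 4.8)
The plan is to compute the $\Htwo$ norm through the observability Gramian of the closed system, exactly as in the scalar formula from \cite{Chapman2015}, and to verify that nothing in that derivation relies on $k=1$. Writing~\eqref{eq:A} in standard state-space form $\dot{\mathbf{x}} = \tilde{A}\mathbf{x} + \tilde{B}u$, $y=\mathbf{x}$, I would set $\tilde{A} = -A(W)$, $\tilde{B} = \mathbf{B}^{\intercal}$ (the input matrix of~\eqref{eq:A}) and $C = I$. The first point to pin down is well-posedness: since $\graph$ is connected, the Dirichlet Laplacian $A(W)$ is positive definite (as noted above~\eqref{eq:4}), so $\tilde{A}$ is symmetric and negative definite, hence Hurwitz; this guarantees a finite $\Htwo$ norm and the existence of the Gramians.

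Next I would invoke the impulse-response characterization $\|G\|_2^2 = \int_0^\infty \Tr\bigl(\tilde{B}^{\intercal} e^{\tilde{A}^{\intercal}t}\,C^{\intercal}C\, e^{\tilde{A}t}\tilde{B}\bigr)\,dt = \Tr\bigl(\tilde{B}^{\intercal}\mathcal{O}\tilde{B}\bigr)$, where the observability Gramian $\mathcal{O} = \int_0^\infty e^{\tilde{A}^{\intercal}t}e^{\tilde{A}t}\,dt$ is the unique positive-definite solution of the Lyapunov equation $\tilde{A}^{\intercal}\mathcal{O} + \mathcal{O}\tilde{A} + I = 0$. The decisive simplification is that $A(W)$ is symmetric, so $\tilde{A}^{\intercal} = \tilde{A} = -A(W)$ and the equation reduces to $-A(W)\mathcal{O} - \mathcal{O}A(W) + I = 0$; substituting the ansatz $\mathcal{O} = \tfrac12 A(W)^{-1}$ verifies it immediately. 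Feeding this back, with $\tilde{B} = \mathbf{B}^{\intercal}$, returns the quadratic form $\tfrac12\Tr\bigl(\mathbf{B}\,A(W)^{-1}\mathbf{B}^{\intercal}\bigr)$ in the input matrix, which is the content of the proposition.

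The only step requiring genuine care---and what makes this a proposition rather than a triviality---is confirming that passing from scalar to matrix weights leaves intact the two structural facts the argument rests on: the symmetry and positive-definiteness of $A(W)$. Here I would unwind the Kronecker and block definitions, noting that each summand $\mathbf{A}_{ij}W_{ij}\mathbf{A}_{ij}^{\intercal}$ and $\mathbf{e}_iW_i\mathbf{e}_i^{\intercal}$ is symmetric positive semidefinite because every $W_{ij},W_i \in \mathcal{S}^k_{++}$, so $A(W)$ is symmetric and, by the connectivity argument already cited, positive definite; with these two facts in hand the scalar derivation transfers verbatim under $B \mapsto \mathbf{B}$. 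I expect the most error-prone part to be purely bookkeeping: keeping the transpose placement in $\mathbf{B}$ versus $\mathbf{B}^{\intercal}$ consistent with the partition~\eqref{eq:4} so that the dimensions match the stated trace. This is cosmetic rather than substantive, since it affects only which factor carries the transpose and not the value of the norm.
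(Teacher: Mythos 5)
Your proposal is correct and follows essentially the same route as the paper: both reduce the $\Htwo$ norm to $\Tr(\tilde{B}^{\intercal}P\tilde{B})$ with $P$ solving the Lyapunov equation $-A(W)P - PA(W)^{\intercal} + I = 0$ and verify the ansatz $P = \tfrac12 A(W)^{-1}$. You additionally spell out what the paper leaves implicit---symmetry and positive definiteness of $A(W)$ guaranteeing that $-A(W)$ is Hurwitz and the Gramian exists---which strengthens rather than changes the argument; the transpose placement on $\mathbf{B}$ is, as you note, purely notational.
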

\begin{proof}
  The $\Htwo$ norm is given by $\Tr(\mathbf{B}^{\intercal}  P \mathbf{B})$, where $P$ satisfies the Lyapunov equation 
  \begin{align}
    -A(W) P - P A(W)^{\intercal}  + I = 0.
  \end{align}
  The ansatz $P = \frac{1}{2} A(W)^{-1}$ yields the solution.
\end{proof}
}

{
The contributions of the paper are the following.
First, we use an electrical perspective on the leader-follower consensus dynamics, as well as a decomposition of series-parallel networks, to compute the $\mathcal{H}_2$ performance on a class of two-terminal series-parallel graphs.
In particular, we show that when adding networks in series and parallel, the $\mathcal{H}_2$ norm follows a similar composition procedure as adding resistors in series and parallel.
Secondly, we provide an adaptive procedure to re-weight the network to minimize the $\mathcal{H}_2$ norm--this procedure computes the `power' dissipated across the network as part of a gradient update, and so can also be computed efficiently using a decomposition of series-parallel networks.
}

{
This extends the work in~\cite{Hudobadebadyn2019} by considering \emph{vector-valued} node states, and therefore \emph{matrix-valued} edge weights.
We show that by using a generalization of electrical network theory with \emph{matrix-valued resistors}, such as in~\cite{Barooah2007}, the electrical interpretation of the $\mathcal{H}_2$ performance of leader-follower consensus holds, and that the computations in the scalar-state case can naturally be extended to analogous, but non-trivial, electrical computations in the vector-state case.
}

\subsection{Electrical Network Models}
\label{sec:electr-netw-models}

One can view consensus through the lens of electrical network theory~\cite{Chapman2013a,Barooah2007}.
In the leader-follower setup of \S\ref{sec:problem-statement}, 
consider the graph $\graph = (\Nodes,\Edges,\Weights)$ with $W_e \in \mathcal{S}^n_{++}$.
The weight $W_e$ represents a \emph{matrix-valued resistor} on each edge $e:=ij$ with \emph{matrix-valued conductance $W_{ij}$}.
A \emph{generalized current} (see \cite{Barooah2007}) from node $u$ to $v$ with \emph{intensity} $\mathbf{i}\in\mathbb{R}^{k\times k}$ is an edge function $\mathbf{I}:\Edges \to \mathbb{R}^{k\times k}$ such that 
\begin{align}
  \sum_{\substack{\{k,l\}\in\Edges \\ k = p}} \mathbf{I}_{\{k,l\}} - \sum_{\substack{\{l,k\}\in\Edges \\ k = p}} \mathbf{I}_{\{l,k\}} = 
  \begin{cases}
    \mathbf{i} & p = u\\
    -\mathbf{i} & p = v\\
    \mathbf{0}_{k\times k} & \text{else}
  \end{cases}~~~\forall p \in \Nodes,
\end{align}
and there exists a node function $\mathbf{V}:\Nodes\to \mathbb{R}^{k\times k}$ satisfying
\begin{align}
  R^{\text{eff}}_{\{u,v\}} \mathbf{I}_{\{u,v\}} = \mathbf{V}_u - \mathbf{V}_v,~~~\forall \{u,v\}\in\Edges.
\end{align}
In this setting, the \emph{power} dissipated across an edge with a matrix weight $R_e^{\text{eff}}$ and current $\mathbf{I}_e$ is given by the inner product,
\begin{align}
  P_e(i_e) = \Tr\left(\mathbf{I}_e^{\intercal}  R_e^{\text{eff}} \mathbf{I}_e \right),
\end{align}
which reduces to the familiar formula $P = i^2 R$ when $k=1$.
\begin{proposition}
  Let $R_1$ and $R_2$ denote two-matrix valued resistances in $\mathcal{S}^k_{++}$, and consider the current $\mathbf{I}\in\mathbb{R}^{k\times k}$ across $R_1$ and $R_2$ in parallel.
  Then, the effective resistance across the join is given by 
$
    R_{\text{tot}} = R_1:R_2 .
$
\end{proposition}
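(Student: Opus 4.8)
The plan is to reduce the matrix-valued parallel combination to the two Kirchhoff laws together with the generalized Ohm's law stated above, and then to verify a purely algebraic identity relating the conductance-sum form to the parallel-addition form. Since $R_1$ and $R_2$ are two-terminal resistors joined at the same pair of nodes $u,v$, I would let $\mathbf{V} \triangleq \mathbf{V}_u - \mathbf{V}_v$ denote the common voltage drop across the join; that both branches see the same drop is exactly Kirchhoff's voltage law for a parallel configuration.

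First I would apply the generalized Ohm's law to each branch. Because $R_1, R_2 \in \mathcal{S}^k_{++}$ are invertible, the branch currents are $\mathbf{I}_1 = R_1^{-1}\mathbf{V}$ and $\mathbf{I}_2 = R_2^{-1}\mathbf{V}$. Next I would impose current conservation from the definition of a generalized current, namely that the injected intensity splits as $\mathbf{I} = \mathbf{I}_1 + \mathbf{I}_2$. Substituting yields $\mathbf{I} = (R_1^{-1} + R_2^{-1})\mathbf{V}$, and since $R_1^{-1} + R_2^{-1} \in \mathcal{S}^k_{++}$ is invertible I may solve for $\mathbf{V} = (R_1^{-1}+R_2^{-1})^{-1}\mathbf{I}$. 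Comparing with the defining relation $R_{\text{tot}}\mathbf{I} = \mathbf{V}$ for the effective resistance of the join identifies $R_{\text{tot}} = (R_1^{-1}+R_2^{-1})^{-1}$.

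It then remains to show $(R_1^{-1}+R_2^{-1})^{-1} = R_1:R_2 = R_1(R_1+R_2)^\dag R_2$. I would first observe that $R_1+R_2 \in \mathcal{S}^k_{++}$, so the pseudoinverse reduces to the ordinary inverse, $(R_1+R_2)^\dag = (R_1+R_2)^{-1}$. The claim then follows from the factorization $R_1^{-1}+R_2^{-1} = R_2^{-1}(R_1+R_2)R_1^{-1}$, whose inverse is precisely $R_1(R_1+R_2)^{-1}R_2 = R_1:R_2$.

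I expect the only delicate point to be this last algebraic step: keeping track of the order of the noncommuting factors and verifying the factorization in the correct sense. As a consistency check, the complementary factorization $R_1^{-1}+R_2^{-1} = R_1^{-1}(R_1+R_2)R_2^{-1}$ inverts to $R_2(R_1+R_2)^{-1}R_1$, so that $R_1(R_1+R_2)^{-1}R_2 = R_2(R_1+R_2)^{-1}R_1$; this confirms that $R_{\text{tot}}$ is symmetric positive-definite and independent of the labelling of the two branches, as any physical resistance must be.
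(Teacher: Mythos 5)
Your proof is correct, but it takes a genuinely different route from the paper. The paper argues variationally: it takes as its starting point that the power dissipated across the parallel join is the infimal convolution $\left(P_1 \square P_2\right)(\mathbf{I})$ of the branch power functionals $P_i(\mathbf{I}_i) = \Tr(\mathbf{I}_i^{\intercal} R_i \mathbf{I}_i)$ over all current splits $\mathbf{I}_1 + \mathbf{I}_2 = \mathbf{I}$, and then applies the Fenchel-conjugate identity $(f \square g)^* = f^* + g^*$ to evaluate the minimum as $\Tr\left(\mathbf{I}^{\intercal}(R_1:R_2)\mathbf{I}\right)$, reading off $R_{\text{tot}} = R_1:R_2$. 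You instead work directly from the paper's definition of a generalized current in \S II-C: the existence of the node function $\mathbf{V}$ gives both branches the common drop $\mathbf{V}_u - \mathbf{V}_v$, current conservation gives $\mathbf{I} = (R_1^{-1} + R_2^{-1})\mathbf{V}$, and the factorization $R_1^{-1}+R_2^{-1} = R_2^{-1}(R_1+R_2)R_1^{-1}$ (which is correct, as is your observation that $(R_1+R_2)^\dag = (R_1+R_2)^{-1}$ on $\mathcal{S}_{++}^k$) yields $R_{\text{tot}} = R_1(R_1+R_2)^{-1}R_2 = R_1:R_2$. Your argument is more elementary and arguably more self-contained: the paper's proof silently invokes a Thomson-principle-type fact (that the physical current split minimizes dissipated power) via its opening ``recall,'' whereas you need only the definitions already stated. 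What the duality argument buys in exchange is twofold: it produces the optimal current split $(\mathbf{I}_1,\mathbf{I}_2) = \arg P_1 \square P_2$, which the paper reuses in Algorithm 2 and in the complexity analysis of Theorem 2, and it survives degeneration to merely positive semidefinite resistances, where your inverse-based computation breaks down and the pseudoinverse in the definition $A:B = A(A+B)^\dag B$ becomes essential. Your closing symmetry check, $R_1(R_1+R_2)^{-1}R_2 = R_2(R_1+R_2)^{-1}R_1$, is a nice touch confirming that $R_{\text{tot}} \in \mathcal{S}_{++}^k$ independently of branch labelling.
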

\begin{proof}
  Recall that power with current $\mathbf{I}$ dissipates across a parallel join according to the infimal convolution,
\begin{align}
  P_p(\mathbf{I}) &= 
  \begin{array}{ll}
    \min & \Tr(\mathbf{I}_1^{\intercal}  R_1 \mathbf{I}_1) + \Tr(\mathbf{I}_2^{\intercal}  R_2 \mathbf{I}_2)\\
    \text{s.t.} & \mathbf{I}_1+ \mathbf{I}_2 = \mathbf{I} 
  \end{array} \label{eq:10}\\
         &\triangleq \left(P_1 \square P_2 \right)(\mathbf{I}).
\end{align}
Applying the identity 
$
  (f \square g)^* = f^* + g^*,
$
where $f^*$ denotes the Fenchel conjugate, to Problem~\eqref{eq:10} yields the minimum $P_p(\mathbf{I}) = \Tr(\mathbf{I}^{\intercal}  (R_1:R_2) \mathbf{I})$.
\end{proof}

Furthermore, the $i$th $k\times k$ block on the diagonal of the matrix, 
\begin{align}
  A(W)^{-1} 
           &= \left(\sum_{\{i,j\}\in\Edges} \mathbf{A}_{ij}W_{ij} \mathbf{A}_{ij}^{\intercal}  + \sum_{i\in R} \mathbf{e}_i W_i\mathbf{e}_i^{\intercal}  \right)^{-1}
\end{align}
form the matrix-valued effective resistances from node $i \in \Nodes\setminus \cal R$ to $\cal R$, denoted $R^{\text{eff}}_u(\cal R)$.
Note that when $x \in \mathbb{R}^{nk\times k}$ is a matrix of stacked $k\times k$ current matrices injected into $n$ nodes of $\graph$, then the matrix $A^{-1} x$ is the stacked matrix of the voltage drops from each node to the grounded leader node set.
In particular, the $s$th $k\times k$ block of $A^{-1}x$ is denoted $\blk_s^{k\times k}[A^{-1} x]$, and if $x = (e_s\otimes I_k) = \mathbf{e}_s$, then this corresponds to a current of identity intensity injected into node $s$;
again, this setup is shown in Figure~\ref{fig:leaderFollower}.
Finally, we denote the quantity 
$
  \mathbf{Y}_i^s = \blk_i^{k\times k}[A^{-1} \mathbf{e}_s],
$
as the voltage drop from node $i$ to $\mathcal{R}$ under identity current injected into node $s$;
this reduces to the corresponding scalar definition seen in~\cite{Chapman2015} when $k=1$.

\subsection{Series-Parallel Graph Models}

In this paper, we consider the class of graphs known as \emph{series-parallel graphs}.
Given a series-parallel graph, there exist efficient (i.e., $\mathcal{O}(\log |\Nodes|)$) algorithms that decompose the graph into atomic structures and simple composition operations on them~\cite{Valdes1979,Eppstein1992,He1987}.

\begin{definition}[Two-Terminal Series-Parallel Graphs]
  \label{def:ttsp}
An acyclic graph is called \emph{two-terminal series-parallel (TTSP)} if  it can be defined recursively as follows:
\begin{enumerate}
  \item The graph defined by two vertices connected by an edge (a \emph{1-path}) is a TTSP graph, where one node is labeled the \emph{source}, and the other the \emph{sink}.
  \item If $\graph_1=(\Nodes_1,~\Edges_1)$ and $\graph_2=(\Nodes_2,~\Edges_2)$ are TTSP where $\mathcal{S}_i = \{s_i\},\mathcal{T}_i=\{t_i\}$ are the unique source and sink of $\graph_i$, then the following operations produce TTSP graphs: 
    \begin{enumerate}
      \item \textbf{Parallel Addition:} $\graph_p \leftarrow s_1\sim s_2,~t_1\sim t_2$.
      \item \textbf{Series Addition:} $\graph_s \leftarrow t_1\sim s_2  $.
    \end{enumerate}
\end{enumerate}  
Denote the parallel join of $\graph_1$ and $\graph_2$ as $\graph_1\oslash\graph_2$, and the corresponding series join as $\graph_1\odot\graph_2$, see Fig~\ref{fig:simplejoin}.
\end{definition}

\begin{figure}
  \centering
  \includegraphics[width=0.6\columnwidth]{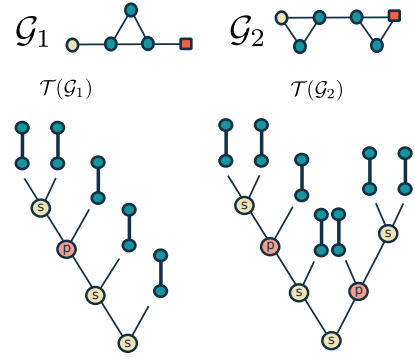}
  \caption{Decomposition trees of two graphs.}
  \label{fig:decompTree}
\end{figure}

The two recursive operations defining the TTSP graph model allow for a simple constructive approach for defining graphs from atomic elements.
Indeed, efficient algorithms exist that decompose TTSP graphs into a \emph{decomposition tree} with the following structure~\cite{Valdes1979,Eppstein1992,He1987}. 
\begin{definition}[TTSP Decomposition Tree]
  A \emph{TTSP decomposition tree} of a TTSP graph $\graph$ is a binary tree $\mathcal{T}(\graph)$ with the following properties: 
  \begin{enumerate}
    \item $\mathcal{T}$ is a complete (sometimes called \emph{full}) binary tree, in that every node has either 2 or 0 children.
    \item Every leaf of $\mathcal{T}$ corresponds to a 1-path.
    \item Every parent of $\mathcal{T}$ corresponds to either a series or parallel addition operation from Def.~\ref{def:ttsp} on its children.
  \end{enumerate}
\end{definition}
In the following proposition, we quantify the  height of the tree in terms of the size of the resulting graph.

\begin{proposition}[Properties of $\mathcal{T}(\graph)$~\cite{Hudobadebadyn2019}]
  \label{prop:tree}
  Let $\graph$ be a TTSP graph with $N$ nodes constructed from $l$ 1-paths with $p$ parallel joins and $s$ series joins.
  Then, the heignt of $\mathcal{T}(\mathcal{G})$ is bounded by
 $
        \log_2\left(N+2p-s\right) \leq h \leq \frac{1}{2}({N+2p+s} -2).
$

\end{proposition}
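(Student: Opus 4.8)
The plan is to read the claim as a purely combinatorial statement about the full binary tree $\mathcal{T}(\graph)$, and to convert the graph data $(N,p,s)$ into the two parameters that govern the height of any full binary tree: its number of leaves and its number of internal nodes.

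First I would record the correspondences built into the decomposition tree. Every leaf of $\mathcal{T}(\graph)$ is a $1$-path and every $1$-path is a leaf, so the number of leaves equals $l$; every internal node is a series or parallel join, so the number of internal nodes equals $p+s$. Since $\mathcal{T}(\graph)$ is full (every node has $0$ or $2$ children), the leaf count exceeds the internal-node count by exactly one, which gives the relation $l = p + s + 1$.

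Next I would pin down how $N$ depends on $(l,p,s)$ by induction over $\mathcal{T}(\graph)$. The base case is a single $1$-path on two nodes. In the inductive step a parallel addition $\graph_1\oslash\graph_2$ identifies both the two sources and the two sinks, and hence deletes two nodes, whereas a series addition $\graph_1\odot\graph_2$ identifies a single sink--source pair and deletes one node. Starting from the $2l$ nodes of the $l$ constituent $1$-paths and accounting for $p$ parallel and $s$ series identifications yields $N = 2l - 2p - s$, equivalently $l = \tfrac12(N + 2p + s)$.

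With the bookkeeping settled, both inequalities reduce to the elementary extremal heights of a full binary tree on $l$ leaves, namely $\lceil\log_2 l\rceil \le h \le l-1$, where the upper extreme is a maximally unbalanced ``caterpillar'' tree and the lower extreme a balanced one. Substituting $l = \tfrac12(N+2p+s)$ into $h \le l-1$ immediately gives the stated upper bound $\tfrac12(N+2p+s-2)$. For the lower bound the clean estimate is $h \ge \log_2 l = \log_2\tfrac12(N+2p+s)$, and I expect the main obstacle to be reconciling this form with the stated $\log_2(N+2p-s)$: one must bring in the structural identity $N=s+2$ that is forced by equating $l=p+s+1$ with $l=\tfrac12(N+2p+s)$, and then track the algebra carefully, since the innocuous substitution used for the (matching) upper bound is precisely the point where the two expressions for the lower bound must be shown to be compatible, paying attention to the regimes where parallel joins dominate.
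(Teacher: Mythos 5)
Your bookkeeping follows what is surely the intended argument (the paper itself gives no proof, importing the proposition from its conference version, so the standard argument is the benchmark): fullness of $\mathcal{T}(\graph)$ gives $l = p+s+1$; counting node identifications (two per parallel join, one per series join) gives $N = 2l-2p-s$, i.e.\ $l = \tfrac12(N+2p+s)$; and the extremal heights of a full binary tree on $l$ leaves give $\log_2 l \le h \le l-1$. Substituting yields the stated upper bound $\tfrac12(N+2p+s-2)$ verbatim, so that half of your proposal is complete and correct.

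The genuine gap is the lower bound, and it is not the algebra-reconciliation task you describe: no amount of careful tracking can close it, because the printed bound is false under the paper's edge-counting definition of height, exactly in the parallel-dominated regime you flagged. Your own two identities force $N = s+2$, so the printed bound reads $\log_2(N+2p-s) = \log_2(2p+2)$, whereas the provable bound is $\log_2 l = \log_2(p+s+1)$; the former strictly exceeds the latter whenever $s \le p$. A minimal counterexample is the paper's own Figure~3(b): two 1-paths joined in parallel give $N=2$, $p=1$, $s=0$, and a decomposition tree that is a root with two leaf children, so $h=1$, while the proposition demands $\log_2(2+2-0)=2 \le h \le \tfrac12(2+2+0-2)=1$, which is vacuously inconsistent. (A balanced all-parallel composition of four 1-paths gives $h=2$ against a demanded lower bound of $\log_2 8 = 3$.) What your argument actually establishes is the correct statement $\log_2\bigl(\tfrac{N+2p+s}{2}\bigr) \le h \le \tfrac12(N+2p+s-2)$, i.e.\ $\log_2 l \le h \le l-1$; the printed lower bound is presumably a typo for $\log_2(N+2p+s)-1$, noting $N+2p+s = 2l$. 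So your instinct about where the trouble lives was exactly right, but the proposal leaves that step unresolved when it should instead be resolved by refutation: state and prove the corrected bound rather than attempt to force the printed one via $N=s+2$.
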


\section{System-Theoretic Analysis on Series-Parallel Graphs}
\label{sec:syst-theor-comp}

\subsection{Synthesis of $\Htwo$-Optimal Networks}

In this section, we discuss an efficient algorithm on series-parallel networks for an \emph{a priori} synthesis computation of the $\Htwo$ norm.
First, we define an \emph{all-input series-parallel graph (AITTSP)}, an example of which is shown in Figure~\ref{fig:examples}.
\begin{definition}[All-Input TTSP Graphs]
  Consider a graph $\graph$ in the setup of the leader-follower consensus dynamics.
  Identify each leader node $i_1,i_2,\dots,i_r\in \mathcal{R}$ as one node $l$ connected to all source nodes $s\in R$, as depicted in Figure~\ref{fig:leaderFollower}.
  The graph $\graph$ is called an \emph{all-input two-terminal series-parallel graph} if, for all source nodes $s\in R$, $\graph$ is TTSP with $s$ as the source and $l$ as the sink.
\end{definition}

Next, let us examine the structure of the $\Htwo$ norm in the context of the leader-follower consensus setup in \S\ref{sec:problem-statement}.
\begin{lemma}
\label{lem:4}
  Consider the leader-follower consensus setup 
  with $C=I\otimes I_k$.
  Then, 
  \begin{align}
    \left(\Htwo^{\graph, B}\right)^2 
                                     &= \dfrac{1}{2}\sum_{s\in R} \Tr\left[\mathbf{Y}_s^s\right] 
                                      = \dfrac{1}{2}\sum_{s\in R} \Tr\left[\rho^{\mathrm{eff}}_{s,\mathcal{R}}\right].
  \end{align}
\end{lemma}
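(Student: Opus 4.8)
The plan is to specialize the matrix-valued $\Htwo$ identity $(\Htwo^{\graph,\mathbf{B}})^2 = \frac{1}{2}\Tr(\mathbf{B}^{\intercal} A(W)^{-1}\mathbf{B})$ proved above to the leader-follower input matrix, and then to collapse the resulting trace onto the diagonal $k\times k$ blocks of $A(W)^{-1}$ indexed by the source set $R$. First I would use the structure of $\mathbf{B}$ coming from the partition~\eqref{eq:4}: since each leader is joined by an identity-weight edge to a unique source node, the block-columns of $\mathbf{B}$ (equivalently the block-rows of $\mathbf{B}^{\intercal}$) are exactly the source indicators $\mathbf{e}_s = e_s\otimes I_k$ for $s\in R$. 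Consequently the congruence $\mathbf{B}^{\intercal} A(W)^{-1}\mathbf{B}$ is an $mk\times mk$ matrix whose $(s,s')$ block is $\mathbf{e}_s^{\intercal} A(W)^{-1}\mathbf{e}_{s'} = \blk_s^{k\times k}[A(W)^{-1}\mathbf{e}_{s'}] = \mathbf{Y}_s^{s'}$.

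The key step is then the block-trace identity: the trace of a block matrix equals the sum of the traces of its diagonal blocks, so the off-diagonal cross-terms $\mathbf{Y}_s^{s'}$ with $s\neq s'$ are annihilated and only the diagonal blocks $\mathbf{Y}_s^s$ survive. This gives $\Tr(\mathbf{B}^{\intercal} A(W)^{-1}\mathbf{B}) = \sum_{s\in R}\Tr(\mathbf{Y}_s^s)$, and multiplying by $\frac{1}{2}$ yields the first equality of the lemma.

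For the second equality I would invoke the electrical reading of $A(W)^{-1}$ developed in \S\ref{sec:electr-netw-models}. There, $\mathbf{Y}_s^s = \blk_s^{k\times k}[A(W)^{-1}\mathbf{e}_s]$ is identified as the voltage dropped from node $s$ to the grounded leader set $\mathcal{R}$ under an injected current of identity intensity at $s$. Applying the matrix Ohm's law $R^{\mathrm{eff}}_{\{u,v\}}\mathbf{I}_{\{u,v\}} = \mathbf{V}_u - \mathbf{V}_v$ with unit current $\mathbf{I}=I_k$, this voltage drop is precisely the matrix-valued effective resistance from $s$ to $\mathcal{R}$, i.e. $\mathbf{Y}_s^s = \rho^{\mathrm{eff}}_{s,\mathcal{R}}$, from which the second equality follows termwise.

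The only delicate point is bookkeeping with the Kronecker/block structure: I need to verify that $\mathbf{e}_s^{\intercal} A(W)^{-1}\mathbf{e}_{s'}$ really is the $(s,s')$ block of $A(W)^{-1}$, and that the trace isolates exactly the diagonal source blocks so that the cross-resistances $\mathbf{Y}_s^{s'}$ between distinct sources do not contribute. Once this is confirmed, the lemma is immediate from the $\Htwo$ formula together with the effective-resistance interpretation, requiring no further computation.
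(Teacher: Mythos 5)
Your proposal is correct and follows essentially the same route as the paper's own proof: both specialize the formula $(\Htwo^{\graph,\mathbf{B}})^2 = \frac{1}{2}\Tr(\mathbf{B}^{\intercal}A(W)^{-1}\mathbf{B})$ to the source-indicator structure of $\mathbf{B}$, use the block-trace identity to discard the cross-blocks $\mathbf{Y}_s^{s'}$, $s\neq s'$, and then invoke the electrical identification of the diagonal blocks $\blk_s^{k\times k}[A^{-1}\mathbf{e}_s]$ with the matrix-valued effective resistances $\rho^{\mathrm{eff}}_{s,\mathcal{R}}$ from \S\ref{sec:electr-netw-models}. Your explicit appeal to the matrix Ohm's law with unit current is a slightly more detailed justification of the final step than the paper gives, but it is the same argument.
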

\begin{proof}
 We note that,
  \begin{align}
     \left(\Htwo^{\graph, B}\right)^2 &= \dfrac{1}{2} \Tr\left(\mathbf{B}^{\intercal}  A^{-1} \mathbf{B}\right)\\
                                      &= \dfrac{1}{2} \Tr\left(\sum_{i,s \in R} (e_i\otimes I_k) A^{-1}_{is} (e_s\otimes I_k)^{\intercal}  \right)\\
                                      &= \dfrac{1}{2} \Tr\left(\sum_{s\in R} (e_s\otimes I_k) A^{-1}_{is} (e_s\otimes I_k)^{\intercal} \right)\\
                                      &= \dfrac{1}{2} \sum_{s\in R}  \Tr\left[\blk_s^{k\times k}\left[A^{-1} \mathbf{e}_s\right]\right]\\
                                      &= \dfrac{1}{2} \sum_{s\in R} \Tr\left[\mathbf{Y}_s^s\right] 
                                      = \dfrac{1}{2} \sum_{s\in R} \Tr\left[\rho^{\text{eff}}_{s,\mathcal{R}}\right]\label{eq:13}.
  \end{align}
\end{proof}
Each quantity $\mathbf{Y}_s^s$ on the left side of the sum in~\eqref{eq:13} is the voltage drop from the source node $s\in R$ to the grounded leader node set $\mathcal{R}$ (depicted in Figure~\ref{fig:leaderFollower}: Right, if $v_s = v_i$).
This is precisely the voltage dropped over the last parallel join of the series-parallel decomposition of an all-input TTSP graph; the join in question is exactly the one depicted in Figure~\ref{fig:examples}.
We can utilize this  observation to efficiently compute the $\Htwo$ norm \emph{a priori} knowing only the weights of the edges and the decomposition tree of $\graph$.

We use the following setup.
Consider a TTSP graph $\graph$ with source node $s$ and sink node $t$.
Ground the source node $s$, and consider the grounded Laplacian $A$ with respect to the grounded source $s$.
This is a leader-follower system with a single leader.

This parallel join, depicted in Figure~\ref{fig:examples}, effectively makes one of the terminals of the resulting graph an element of $\mathcal{R}$, and the other terminal (the sink) an element of $R$.
The control matrix of the leader-follower consensus problem corresponds to exactly those elements in $R$, which are the `sinks' of the TTSP graph used in that computation.
Therefore, our choice of $\mathbf{B}$ must always select the state of the sink vector $t$; hence $\mathbf{B} = \mathbf{e}_t = e_t\otimes I_k$.

We now proceed in two steps.
First, we need a lemma that essentially states that for an arbitrary TTSP graph, there exists an equivalent 1-path TTSP graph with the same effective resistance.
Then, any composition rule on arbitrary TTSP graphs can be reduced to a composition on the equivalent 1-paths, simplifying the analysis.
Afterward, we show that the series-parallel composition of graphs produces a similar series-parallel computation of the $\Htwo$ performance.

\begin{lemma}
  \label{lem:5}
  Consider two graphs: an arbitrary TTSP graph $\graph_1$ with source $s_1$ and sink $t_1$ with effective resistance $\rho^{\text{eff}}_{s_1,t_1}$, and a 1-path TTSP graph $\graph_2$ with source $s_2$ and sink $t_2$ with effective resistance $\rho^{\text{eff}}_{s_2,t_2}$.
  Let their respective control matrices be $\mathbf{B}_1 = \mathbf{e}_{t_1}$ and $\mathbf{B}_2=\mathbf{e}_{t_2}$.
  Further suppose that 
$
    \rho^{\text{eff}}_{s_1,t_1}=\rho^{\text{eff}}_{s_2,t_2}.
$
  Then, 
$
    (\Htwo^1)^2 = (\Htwo^2)^2.
$
\end{lemma}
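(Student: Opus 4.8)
The plan is to observe that in the grounded single-leader configuration described immediately before the statement, the $\Htwo$ norm depends on nothing but the source-to-sink effective resistance, so the claim follows by direct substitution into Lemma~\ref{lem:4}. The structure of the graph is irrelevant beyond this one quantity, which is precisely the feature that makes an arbitrary TTSP graph interchangeable with a 1-path of matching effective resistance.

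First I would pin down the relevant index sets for each $\graph_i$. Grounding the source $s_i$ places its attached leader into $\mathcal{R}$, and since $\mathbf{B}_i = \mathbf{e}_{t_i}$, the set of source nodes $R$ attached to leaders is the singleton $\{t_i\}$. Consequently the sum over $R$ in Lemma~\ref{lem:4} collapses to a single term, giving
\begin{align}
    (\Htwo^i)^2 = \frac{1}{2}\Tr\left[\mathbf{Y}_{t_i}^{t_i}\right] = \frac{1}{2}\Tr\left[\rho^{\text{eff}}_{t_i,\mathcal{R}}\right].
\end{align}
Next I would identify $\rho^{\text{eff}}_{t_i,\mathcal{R}} = \rho^{\text{eff}}_{s_i,t_i}$: within $\graph_i$ the only grounded node reachable from the sink $t_i$ is the source $s_i$, so the effective resistance from the sink to the grounded leader set is exactly the source-to-sink effective resistance of $\graph_i$. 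Substituting the hypothesis $\rho^{\text{eff}}_{s_1,t_1} = \rho^{\text{eff}}_{s_2,t_2}$ then yields $\Tr[\rho^{\text{eff}}_{s_1,t_1}] = \Tr[\rho^{\text{eff}}_{s_2,t_2}]$, whence $(\Htwo^1)^2 = (\Htwo^2)^2$.

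The genuinely computational content is therefore nil; everything reduces to the two bookkeeping facts about the grounded single-leader setup, namely that $R$ is the singleton $\{t_i\}$ and that $\rho^{\text{eff}}_{t_i,\mathcal{R}} = \rho^{\text{eff}}_{s_i,t_i}$. I expect these — and in particular the careful justification that the grounded leader set contributes only through the source terminal — to be the only point requiring attention. Once they are in place, the equality of the two $\Htwo$ norms is immediate from Lemma~\ref{lem:4}, with the arbitrary-versus-1-path distinction playing no role in the argument (it matters only for the downstream simplification the lemma is designed to enable).
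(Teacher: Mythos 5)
Your proposal is correct and takes essentially the same route as the paper's own proof: both invoke Lemma~\ref{lem:4} in the grounded single-leader configuration and identify the diagonal block $\mathbf{Y}_{t_i}^{t_i}$ of $A_i^{-1}$ with the source-to-sink effective resistance $\rho^{\text{eff}}_{s_i,t_i}$, after which the hypothesis gives the equality by direct substitution. Your write-up merely makes explicit the bookkeeping the paper leaves implicit (that the grounded set is $\{s_i\}$ and the input set collapses to $\{t_i\}$), and it correctly keeps $\Tr\left[\rho^{\text{eff}}\right]$ where the paper's display informally drops the trace.
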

\begin{proof}
  The setup of the graphs in the statement of the lemma is a leader-follower consensus with grounded (leader) nodes $s_1,s_2$.
  Therefore, we can invoke Lemma~\ref{lem:4}: denoting the graphs' respective Dirichlet Laplacians as $A_1,A_2$ we can compute,
  \begin{align}
     &(\Htwo^1)^2 
                 = \frac{1}{2}\Tr\left[(e_{t_1}^{\intercal}  \otimes I_k)\left[A_1\right]^{-1}(e_{t_1}\otimes I_k)\right]\\
                 &= \frac{1}{2} \rho^{\text{eff}}_{s_1,t_1}
                 = \frac{1}{2} \rho^{\text{eff}}_{s_2,t_2}
                 = \frac{1}{2}\Tr\left[\mathbf{B}_2^{\intercal} \left[A_2\right]^{-1}\mathbf{B}_2\right]
                 =  (\Htwo^2)^2.
  \end{align}
\end{proof}
Lemma~\ref{lem:5} will allow us to reduce the computation of the $\Htwo$ norm of a composite TTSP graph to the computation of $\Htwo$ norms of an equivalent 1-path. 
This allows us to prove the following theorem.
\begin{theorem}
  \label{thr:3}
  Consider the leader-follower consensus setup in \S\ref{sec:problem-statement} on two graphs: an arbitrary TTSP graph $\graph_1$ with source $s_1$ and sink $t_1$, and a second arbitrary TTSP graph $\graph_2$ with source $s_2$ and sink $t_2$. 
  Let the Dirichlet Laplacian of $\graph_i$ grounded with respect to its source $s_i$ be $A_i$, and its control matrix be $\mathbf{B}_i = \mathbf{e}_{t_i} = e_i\otimes I_k$.
  Then, the $\Htwo$ norm of the corresponding dynamics is given by 
 $
    (\Htwo^{\graph_i})^2 = \frac{1}{2}\Tr[\mathbf{B}_i^{\intercal}  \left[A_i\right]^{-1} \mathbf{B}_i ].
 $
  Furthermore, 
  \begin{align}
    \left(\Htwo^{\graph_1\odot\graph_2}\right)^2   &= \left(\Htwo^{\graph_1}\right)^2 + \left(\Htwo^{\graph_2}\right)^2\label{eq:15} \\
    \left(\Htwo^{\graph_1\oslash\graph_2}\right)^2 &\leq \left(\Htwo^{\graph_1}\right)^2 :\left(\Htwo^{\graph_2}\right)^2 ,\label{eq:14}
  \end{align}
  with equality in the last display if and only if $\rho^\text{eff}_{s_1,t_1} = c \rho^\text{eff}_{s_2,t_2}$ for $c>0$.
\end{theorem}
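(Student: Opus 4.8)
The plan is to dispatch the three assertions in turn, using Lemma~\ref{lem:5} to replace each arbitrary TTSP factor by an equivalent $1$-path so that only effective resistances enter the computation. The first display is immediate: grounding the source $s_i$ renders $\graph_i$ a single-leader consensus system whose control matrix selects the sink, so the opening $\Htwo$ Proposition applied with $\mathbf{B}=\mathbf{B}_i$ and $A(W)=A_i$ gives $(\Htwo^{\graph_i})^2=\tfrac12\Tr[\mathbf{B}_i^{\intercal}[A_i]^{-1}\mathbf{B}_i]$, while Lemma~\ref{lem:4} with $R=\{t_i\}$ and $\mathbf{B}_i=\mathbf{e}_{t_i}$ rewrites this as $\tfrac12\Tr[\rho^{\text{eff}}_{s_i,t_i}]$.

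For the series rule~\eqref{eq:15} I would invoke Lemma~\ref{lem:5} to replace each $\graph_i$ by a $1$-path of matrix resistance $\rho^{\text{eff}}_{s_i,t_i}$. The series join identifies $t_1\sim s_2$, and two matrix resistors carrying a common current have voltage drops that add, so the effective resistance of $\graph_1\odot\graph_2$ from $s_1$ to $t_2$ is $\rho^{\text{eff}}_{s_1,t_1}+\rho^{\text{eff}}_{s_2,t_2}$. Linearity of the trace then splits $\tfrac12\Tr[\rho^{\text{eff}}_{s_1,t_1}+\rho^{\text{eff}}_{s_2,t_2}]$ into the two summands, which is exactly~\eqref{eq:15}.

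For the parallel rule~\eqref{eq:14}, Lemma~\ref{lem:5} together with the parallel-resistance Proposition shows the effective resistance of $\graph_1\oslash\graph_2$ is $\rho^{\text{eff}}_{s_1,t_1}:\rho^{\text{eff}}_{s_2,t_2}$, so with $R_i:=\rho^{\text{eff}}_{s_i,t_i}$ the left side equals $\tfrac12\Tr[R_1:R_2]$. The right side is the scalar parallel sum $\tfrac12\Tr[R_1]:\tfrac12\Tr[R_2]=\frac{\Tr[R_1]\,\Tr[R_2]}{2\,\Tr[R_1+R_2]}$. Writing $S:=R_1+R_2\succ0$ and using the identity $R_1:R_2=R_1(R_1+R_2)^{-1}R_2=R_1-R_1S^{-1}R_1$, clearing the positive denominator $2\Tr[S]$ reduces the claimed bound to the single scalar inequality $\Tr[R_1]^2\le \Tr[R_1S^{-1}R_1]\,\Tr[S]$.

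The crux — and the one genuinely nontrivial step — is this last inequality, which I would read as Cauchy--Schwarz in the Hilbert--Schmidt inner product $\langle X,Y\rangle=\Tr[X^{\intercal}Y]$. Taking $A=S^{-1/2}R_1$ and $B=S^{1/2}$ (both well defined since $S\succ0$) yields $\Tr[A^{\intercal}B]=\Tr[R_1]$, $\Tr[A^{\intercal}A]=\Tr[R_1S^{-1}R_1]$, and $\Tr[B^{\intercal}B]=\Tr[S]$, so $(\Tr[A^{\intercal}B])^2\le\Tr[A^{\intercal}A]\Tr[B^{\intercal}B]$ is precisely the bound. Equality holds iff $A$ and $B$ are collinear, i.e. $S^{-1/2}R_1=cS^{1/2}$, equivalently $R_1=c(R_1+R_2)$; solving gives $R_1=\tfrac{c}{1-c}R_2$ with $\tfrac{c}{1-c}>0$, matching the stated condition $\rho^{\text{eff}}_{s_1,t_1}=c\,\rho^{\text{eff}}_{s_2,t_2}$ for $c>0$. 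I expect the remaining care to be purely bookkeeping: tracking the factors of $\tfrac12$ when passing between the matrix and scalar parallel sums, and confirming the linearizing identity $R_1:R_2=R_1-R_1S^{-1}R_1$.
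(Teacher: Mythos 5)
Your proof is correct, but it diverges from the paper's in an instructive way. The paper, after the same reduction via Lemma~\ref{lem:5} to equivalent $1$-paths, establishes the series rule~\eqref{eq:15} by explicitly writing the Dirichlet Laplacian of the joined $1$-paths and applying block-matrix inversion (arriving at $\frac{1}{2}(W_1(W_1+W_2)^{-1}W_2)^{-1} = \frac{1}{2}W_1^{-1}+\frac{1}{2}W_2^{-1}$), whereas you invoke the electrical fact that matrix resistances in series add; that fact is true and is a one-line consequence of the paper's definitions of generalized current and potential (the common current $\mathbf{I}$ through both edges gives $\mathbf{V}_{s_1}-\mathbf{V}_{t_2}=(R_1+R_2)\mathbf{I}$), but note the paper only formally states the \emph{parallel} composition law as a proposition, so a fully self-contained write-up should include that line. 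The more substantive difference is in the parallel rule~\eqref{eq:14}: the paper simply cites Theorem~13 of Anderson (1969) for the trace inequality $\Tr(A\!:\!B)\leq \Tr(A)\!:\!\Tr(B)$ together with its equality case $A=cB$, while you re-derive it from scratch, reducing (correctly, via $R_1\!:\!R_2=R_1-R_1S^{-1}R_1$ with $S=R_1+R_2$) to
\begin{align}
\left(\Tr[R_1]\right)^2 \leq \Tr\left[R_1 S^{-1} R_1\right]\,\Tr[S],
\end{align}
and proving this by Cauchy--Schwarz in the Hilbert--Schmidt inner product with $A=S^{-1/2}R_1$, $B=S^{1/2}$; your equality analysis ($S^{-1/2}R_1=cS^{1/2}$ forcing $R_1=\tfrac{c}{1-c}R_2$ with $0<c<1$, so the ratio is positive, and positivity of $c$ being automatic since $\langle A,B\rangle=\Tr[R_1]>0$) correctly recovers the stated condition $\rho^{\text{eff}}_{s_1,t_1}=c\,\rho^{\text{eff}}_{s_2,t_2}$, $c>0$. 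What each approach buys: the paper's citation-based route is shorter and defers the analytic content to Anderson--Duffin theory, while yours makes the theorem self-contained --- including the equality characterization, which in the paper also rests entirely on the external citation --- at the cost of a page of elementary computation.
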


\begin{figure} 
    \centering
  \subfloat[Series join of 1-paths]{%
       \includegraphics[width=0.35\columnwidth]{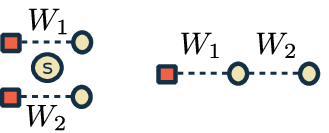}}
    \label{join1}~~
  \subfloat[Parallel join of 1-paths]{%
        \includegraphics[width=0.35\columnwidth]{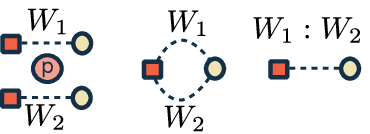}}
    \label{join2}
  \caption{Series and parallel joins of weighted 1-paths.}
  \label{fig:simplejoin}
\end{figure}

\begin{proof}
  By Lemma~\ref{lem:4}, the $\Htwo$ norm of an arbitrary graph can be computed from an equivalent 1-path.
  Hence, we need to show~\eqref{eq:15} and~\eqref{eq:14} for series and parallel joins of 1-paths.
  
  Consider the 1-paths in Figure~\ref{fig:simplejoin}, with weights $W_1,W_2 \in \mathcal{S}^k_{++}$ between the sources (square nodes) and sinks (circular nodes).
  The Dirichlet Laplacians of both with respect to the grounded source nodes are simply 
$
    \lap_{\graph_1} = \left[ W_1 \right],~\lap_{\graph_2} = \left[ W_2 \right],
$
  and their respective control matrices are $B_{\graph_1}= B_{\graph_2} = I_k$
  and $\Htwo$ norms are 
 $
    (\Htwo^{\graph_1})^2 = \frac{1}{2}\Tr [W_1^{-1}]$, $(\Htwo^{\graph_2})^2 = \frac{1}{2} \Tr[W_2^{-1}].
 $

  Similarly, the Laplacians of the series and parallel joins in Figure~\ref{fig:simplejoin} are,
  \begin{align}
     \lap_{\graph_1 \oslash\graph_2} = \left[ W_1+W_2\right] ,~\lap_{\graph_1 \odot\graph_2} = 
    \begin{bmatrix}
      W_1+W_2 & -W_2\\
      -W_2 & W_2
    \end{bmatrix}.
  \end{align}
  The control matrices are 
$
    B_{\graph_1 \oslash\graph_2} = I_k$, $B_{\graph_1 \odot\graph_2} = 
[
      \mathbf{0}_{k\times k} ~ I_k
].
$
  Therefore, the $\Htwo$ norm in the series join case is 
  \begin{align}
   & \left(\Htwo^{\graph_1\odot\graph_2}\right)^2\\   &= \frac{1}{2} \Tr\left[ 
                                                     \begin{bmatrix}
                                                       \mathbf{0}_{k\times k} & I_k
                                                     \end{bmatrix}
                                                           \begin{bmatrix}
                                                             W_1+W_2 & -W_2\\
                                                             -W_2 & W_2      
                                                           \end{bmatrix}^{-1}
                                                     \begin{bmatrix}
                                                       \mathbf{0}_{k\times k} \\ I_k
                                                     \end{bmatrix}\right]\\
                                                   &= \frac{1}{2}\left( W_2 - W_2\left(W_1 + W_2 \right)^{-1}W_2 \right)^{-1},\label{eq:11}
  \end{align}
  where \eqref{eq:11} follows from standard block matrix inversion formulae.
  Note that we can write 
  \begin{align}
   & W_1(W_1+W_2)^{-1}W_2
                         = W_2 - W_2\left(W_1 + W_2 \right)^{-1}W_2.
  \end{align}
  Inverting this expression and substituting into~\eqref{eq:11} yields,
  \begin{align}
    &\left(\Htwo^{\graph_1\odot\graph_2}\right)^2 = \frac{1}{2}\left(W_1(W_1+W_2)^{-1}W_2\right)^{-1} \\
                                                 &= \frac{1}{2}W_1^{-1} + \frac{1}{2}W_2^{-1}
    = \left(\Htwo^{\graph_1}\right)^2 + \left(\Htwo^{\graph_2}\right)^2,
  \end{align}
  as desired.
  We can compute the $\Htwo$ norm in the parallel join case by invoking Theorem 13 from \cite{Anderson1969}, which states that for $A,B \in \mathcal{S}_{++}^k$, 
$
    \Tr(A:B) \leq \Tr(A) : \Tr(B),
$
  with equality if and only if $A = cB$ for some $c>0$.
  By this result, and by Lemma~\ref{lem:4}, we can compute:
  \begin{align}
    \left(\Htwo^{\graph_1\oslash\graph_2}\right)^2 &= \frac{1}{2}\Tr\left[\rho^{\text{eff}}_1 : \rho^{\text{eff}}_2 \right]
                                                   \leq \frac{1}{2} \left(\Tr\left[\rho^{\text{eff}}_1\right] : \Tr\left[ \rho^{\text{eff}}_2 \right]\right) \\
& = \left(\Htwo^{\graph_1}\right)^2 :\left(\Htwo^{\graph_2}\right)^2.
  \end{align}
This concludes the proof.
\end{proof}

We now propose Algorithm~\ref{alg:4} for computing the $\Htwo$ norm of an AITTSP graph with control input $e_s$.
If the matrix weights across the graph are scalar multiples of each other (which is the case when $k=1$), Algorithm~\ref{alg:4} computes precisely the $\Htwo$ norm; otherwise it computes an upper bound.
\removelatexerror
\begin{algorithm}[H]
  \label{alg:4}
  \SetAlgoNoLine
 \caption{$\Htwo$ norm of AITTSP Graph with $\mathbf{B}=\mathbf{e}_i$}
\SetAlgoLined
\KwIn{Decomposition tree $\mathcal{T}(\graph)$, weights $\Weights(\graph)$, $\mathbf{e}_i$}
\KwResult{ Upper bound on $(\Htwo^{\graph})^2$ }
 \For{each leaf ${\mathcal{L}}$ of $\mathcal{T}(\graph)$}{
  Output $(\Htwo^{\mathcal{L}})^2 = \frac{1}{2}W_{\mathcal{L}}^{-1}$ to parent\;
 }
 \For{each parent $j$ of $\mathcal{T}(\graph)$}{
  \eIf{received $\Htwo^{\graph_i}$ from both children}{
    \eIf{$j$ is a series join}{
       Output $(\Htwo)^2 \leftarrow \left(\Htwo^{\graph_1}\right)^2 + \left(\Htwo^{\graph_2}\right)^2 $ to parent\;
    }{
       Output $(\Htwo)^2_{\text{est}} \leftarrow \left(\Htwo^{\graph_1}\right)^2 :\left(\Htwo^{\graph_2}\right)^2 $ to parent\;
    }
   }{
   wait\;
  }
 }
\Return{$(\Htwo)^2$ at root node of $\mathcal{T}(\graph)$.}
\end{algorithm}
\begin{theorem}
  \label{thr:1}
Consider the leader-follower consensus dynamics as in \S\ref{sec:problem-statement} on an all-input TTSP graph $\graph$.
Then, the $\Htwo$ norm of this system is given by 
\begin{align}
  \left(\Htwo^{\graph, \mathbf{B}}\right)^2 \triangleq -\frac{1}{2}\Tr\left[ \mathbf{B}^{\intercal}  A^{-1} \mathbf{B} \right] = \sum_{s\in \mathcal{R}}  \left(\Htwo^{\graph, \mathbf{e}_s}\right)^2,
\end{align}  
and the best-case complexity of computing this $\Htwo$ norm is $\mathcal{O}(k^\omega|\mathcal{R}| \log |\Nodes|)$, and the worst-case complexity is $\mathcal{O}(k^\omega|\mathcal{R}||\Nodes|)$.
\end{theorem}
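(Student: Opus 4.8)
The plan is to split the statement into its two independent parts, the exact decomposition formula and the complexity bound, and to build each one directly on the machinery already in place: Lemma~\ref{lem:4} for the formula, and Algorithm~\ref{alg:4} together with Theorem~\ref{thr:3} and Proposition~\ref{prop:tree} for the complexity.

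First, for the decomposition formula I would start from Lemma~\ref{lem:4}. In the all-input setup the control matrix selects exactly the source nodes, so $\mathbf{B}^\intercal = [e_{i_1}~\cdots~e_{i_m}]\otimes I_k$ with $R=\{i_1,\dots,i_m\}$. Expanding $\Tr[\mathbf{B}^\intercal A^{-1}\mathbf{B}]$ in block form, only the diagonal blocks $A^{-1}_{ss}$ survive under the trace, and each such block is precisely $\mathbf{Y}_s^s = \blk_s^{k\times k}[A^{-1}\mathbf{e}_s]$. Recognising $\frac{1}{2}\Tr[\mathbf{Y}_s^s] = \left(\Htwo^{\graph,\mathbf{e}_s}\right)^2$ then gives $\frac{1}{2}\Tr[\mathbf{B}^\intercal A^{-1}\mathbf{B}] = \sum_{s}\left(\Htwo^{\graph,\mathbf{e}_s}\right)^2$, where the sum runs over the sources $R$, identified with the leaders $\mathcal{R}$ via the identity-weight edges (so $|R|=|\mathcal{R}|$). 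This part is essentially a re-reading of Lemma~\ref{lem:4}, so I expect it to be routine.

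Second, for the complexity I would argue that each summand $\left(\Htwo^{\graph,\mathbf{e}_s}\right)^2 = \frac{1}{2}\Tr[\rho^{\mathrm{eff}}_{s,\mathcal{R}}]$ is evaluated by Algorithm~\ref{alg:4}, whose correctness on series and parallel joins is exactly Theorem~\ref{thr:3}: the leaf values $\frac{1}{2}W_{\mathcal{L}}^{-1}$ are combined up the decomposition tree by matrix addition at series nodes and parallel addition at parallel nodes. Each composition step costs $\mathcal{O}(k^\omega)$, dominated by the $k\times k$ (pseudo)inverse inside the parallel addition $A:B = A(A+B)^\dag B$, while series additions are cheaper. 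The relevant sequential depth of this bottom-up sweep is the height $h$ of $\mathcal{T}(\graph)$, so a single source costs $\mathcal{O}(k^\omega h)$. Applying the height bounds of Proposition~\ref{prop:tree} gives $h=\mathcal{O}(\log|\Nodes|)$ for a balanced (best-case) decomposition and $h=\mathcal{O}(|\Nodes|)$ for a degenerate (worst-case) one; repeating over all $|\mathcal{R}|$ sources multiplies by $|\mathcal{R}|$, producing the two claimed bounds.

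The main obstacle I anticipate is the complexity accounting rather than the correctness. Two points need care. First, I must justify that the per-source cost scales with the tree height $h$ rather than with the total number of tree nodes; this requires treating the bottom-up composition as a depth-$h$ (pipelined or parallelisable) sweep so that Proposition~\ref{prop:tree} applies directly, consistent with the $\mathcal{O}(\log|\Nodes|)$ decomposition algorithms cited earlier. Second, I would be precise about the exact-versus-bound distinction: the decomposition $\sum_s(\cdot)$ and each matrix-valued effective resistance compose \emph{exactly} (series addition, and the identity $R_1:R_2$ for the parallel join), but Algorithm~\ref{alg:4} returns only an upper bound once traces enter, because $\Tr[R_1:R_2]\le \Tr[R_1]:\Tr[R_2]$ with equality iff $\rho^{\mathrm{eff}}_{s_1,t_1}=c\,\rho^{\mathrm{eff}}_{s_2,t_2}$ by Theorem~\ref{thr:3}. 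I would therefore state the theorem's equality as the true value $\frac{1}{2}\Tr[\mathbf{B}^\intercal A^{-1}\mathbf{B}]=\sum_s(\cdot)$ and attribute the upper bound solely to the algorithmic evaluation, matching the exactness remark for the $k=1$ (or scalar-multiple) case.
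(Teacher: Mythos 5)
Your proposal is correct and follows essentially the same route as the paper's proof: the paper likewise obtains the decomposition by expanding $\Tr\left[\mathbf{B}^{\intercal} A^{-1}\mathbf{B}\right]$ blockwise into the $|\mathcal{R}|$ per-source norms (its Eq.~\eqref{eq:h2}, which is just Lemma~\ref{lem:4} re-read), and derives the complexity by charging each layer of the decomposition tree an $\mathcal{O}(k^\omega)$ inversion (from the parallel addition and the $\arg P_1 \square P_2$ computation), multiplying by the height bounds of Proposition~\ref{prop:tree} and by $|\mathcal{R}|$ repetitions. Your two points of extra care---that the per-source cost scales with tree height only under the layer-wise independent (parallelisable) execution the paper implicitly assumes, and that the summation formula is exact while Algorithm~\ref{alg:4} returns an upper bound unless the effective resistances are scalar multiples---are refinements of, not departures from, the paper's terser argument.
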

\begin{proof}
  We can compute: 
  \begin{align}
    \left(\Htwo^{\graph, \mathbf{B}}\right)^2  = - \frac{1}{2} \sum_{s\in\mathcal{R}} \Tr\left[ \mathbf{e}_s^{\intercal}  A^{-1} \mathbf{e}_s \right] = \sum_{s\in\mathcal{R}} \left[ \Htwo^{\graph_2,\mathbf{e}_s} \right]^2.\label{eq:h2}
  \end{align}
  This is a sum of $|\mathcal{R}|$ $\Htwo$ norms of leader-follower consensus networks with control input $\mathbf{e}_s$.
  Since at each layer of the decomposition tree $\mathcal{T}(\graph)$ each computation happens independently, the complexity depends on the height $h$ of the decomposition tree.
  Hence, the complexity is determined by $h$, which by Proposition~\ref{prop:tree} is $\mathcal{O}( \log |\Nodes|)$, and  $\mathcal{O}(|\Nodes|)$ for best and worst-case, respectively.
 Each call of the three algorithms must be done $|\mathcal{R}|$ times.
 Algorithm 1 may perform an inversion of a $k\times k$ matrix, which has complexity $\mathcal{O}(k^\omega)$.
 Algorithm 2 requires computing $(\mathbf{I}_1,\mathbf{I}_2) = \arg P_1\square P_2$, given by
 \begin{align}
   (R_{\text{eff}_1}^{-1} (R_{\text{eff}_1}^{-1}:R_{\text{eff}_2}^{-1}) \mathbf{I}_{\text{in}},R_{\text{eff}_2}^{-1} (R_{\text{eff}_1}^{-1}:R_{\text{eff}_2}^{-1}) \mathbf{I}_{\text{in}}),
 \end{align} 
 over every parallel join, which also requires inverting  two $k\times k$ matrices (the parallel addition of resistances is computed in Algorithm 1).
\end{proof}

\subsection{Noise Rejection and Adaptive Weight Design}

{It is often the case that one wishes to adapt the network in order to reject noise.
For example, in a swarm of UAVs performing consensus on heading, it is undesirable for the swarm to be influenced by wind gusts\cite{Chapman2015}.
Similarly, one may wish to design matrix-weighted networks in order to reject noise during distributed attitude control and estimation~\cite{Trinh2018a}.
In this section, we discuss a protocol that allows the network to quickly adapt its interaction edge weights to minimize the collective network response to such disturbances.}

Consider the leader-follower consensus dynamics setup from 
\S\ref{sec:problem-statement}, and  the task of re-assigning weights to the edges of the network to minimize the $\Htwo$ norm.
This can be done via the optimization problem, 
\begin{align}
  \begin{array}{ll}
    \min &  \left(\Htwo^{\graph, \mathbf{B}}\right)^2 = \frac{1}{2}\Tr\left(\mathbf{B}^{\intercal} A(W)^{-1}\mathbf{B}\right)\\
    \text{s.t.} & W := \mathrm{Blkdiag}(W_e)\\
         &W_e\in \mathcal{S}^{k\times k}_{++},~\forall e\in\Edges\\
     & U_e \succeq W_e \succeq L_e,~\forall e\in\Edges,~L_e,P_e\in\mathcal{S}_{++}^{k\times k}.
  \end{array}\label{eq:6}
\end{align}

\begin{remark}
  \label{rem:1}
  One must include the bounds in Problem~\eqref{eq:6} in order to prevent edges from becoming disconnected, or from becoming arbitrarily `large' (in the sense of the Loewner order on $W_e$).
This also motivates adding a  regularizer term $\frac{1}{2} h \sum_{e\in \Edges}\|W_e\|^2$ for some matrix norm $\|\cdot\|$ to the cost function. 
\end{remark}

The key features of optimization problem~\eqref{eq:6} are highlighted in the following proposition and proven in the appendix.
\begin{proposition}
\label{prop:optim}
  Consider the setting of Problem~\eqref{eq:6}.
  Then, 
  \begin{enumerate}
    \item The objective  $f_W = \frac{1}{2}\Tr\left(\mathbf{B}^{\intercal} A(W)^{-1}\mathbf{B}\right)$ is strongly convex on the cone of positive-definite matrices $W_e\in\mathcal{S}_{++}^k$.
    \item The gradient of the objective function with respect to a single edge weight $W_e$ at a point $H\in \mathcal{S}_{++}^k$ is given by $\nabla f_{W_e}[H] = -\frac{1}{2}Q Q^{\intercal}$, where 
$
        Q = \mathbf{A}_e^{\intercal}  A^{-1} \mathbf{e}_s,
$
and
      $\mathbf{Y}_i^s = \blk_i^{k\times k} \left[ A^{-1} \left( e_s\otimes I_k \right) \right]$, where $A^{-1}$ is evaluated with $H$ in place of $W_e$.
  \end{enumerate}
\end{proposition}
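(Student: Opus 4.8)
The plan is to treat the two claims in reverse order, establishing the gradient (claim~2) first since the convexity arguments (claim~1) reuse the same differential identities. For the gradient, the starting observation is that $A(W)$ is \emph{affine} in each edge weight: isolating the edge $e$, I would write $A(W) = A_0 + \mathbf{A}_e W_e \mathbf{A}_e^{\intercal}$, where $A_0$ gathers all terms not involving $W_e$ and is constant in the direction of interest. Perturbing $W_e \mapsto W_e + tH$ with $H \in \mathcal{S}^k$ and applying the matrix differential identity $\tfrac{d}{dt}A^{-1} = -A^{-1}\big(\tfrac{d}{dt}A\big)A^{-1}$ with $\tfrac{d}{dt}A = \mathbf{A}_e H \mathbf{A}_e^{\intercal}$, the first-order term of $f_{W_e} = \tfrac{1}{2}\Tr(\mathbf{B}^{\intercal} A^{-1}\mathbf{B})$ with $\mathbf{B}=\mathbf{e}_s$ is $-\tfrac{1}{2}\Tr\big(\mathbf{e}_s^{\intercal} A^{-1}\mathbf{A}_e H \mathbf{A}_e^{\intercal} A^{-1}\mathbf{e}_s\big)$. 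Setting $Q = \mathbf{A}_e^{\intercal} A^{-1}\mathbf{e}_s$ and using the symmetry of $A^{-1}$ together with the cyclic property of the trace, this equals $-\tfrac{1}{2}\Tr(QQ^{\intercal} H)$, which identifies the gradient in the trace inner product as $\nabla f_{W_e}[H] = -\tfrac{1}{2}QQ^{\intercal}$. I would close this part by observing that, for $e=\{i,j\}$, $Q = \mathbf{A}_e^{\intercal} A^{-1}\mathbf{e}_s = \mathbf{Y}_i^s - \mathbf{Y}_j^s$ is the voltage drop across edge $e$ under identity current injected at $s$, which is what makes the gradient a \emph{power}-like quantity computable by the same series--parallel decomposition as in Theorem~\ref{thr:1}.

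For convexity, the cleanest route is structural: since $W \mapsto A(W)$ is affine and the matrix inverse $X \mapsto X^{-1}$ is operator convex on $\mathcal{S}_{++}^k$, the composite map $W \mapsto \mathbf{B}^{\intercal} A(W)^{-1}\mathbf{B}$ is convex in the Loewner order, and applying the linear, order-preserving trace gives convexity of $f$. Equivalently, and more useful for the strong part, I would differentiate a second time, reusing the identity above, to obtain
\[
  \ddot f = \Tr\big(\mathbf{B}^{\intercal} A^{-1}(\delta A)A^{-1}(\delta A)A^{-1}\mathbf{B}\big) = \big\| A^{-1/2}(\delta A)A^{-1}\mathbf{B} \big\|_F^2 \geq 0,
\]
with $\delta A = \sum_{e\in\Edges}\mathbf{A}_e (\delta W_e)\mathbf{A}_e^{\intercal}$. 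The Frobenius-norm form makes nonnegativity transparent and isolates exactly the quantity that must be bounded below for strong convexity.

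For strong convexity I would lower-bound this Hessian on the feasible set. Using the Loewner bounds $L_e \preceq W_e \preceq U_e$ from Problem~\eqref{eq:6}, $A(W)$ ranges over a compact set bounded away from singularity, so $A^{-1} \succeq \alpha I$ for some $\alpha>0$ and $A^{-1}$ is simultaneously bounded above; combined with the full-column-rank structure of $\mathbf{A}_e = a_e \otimes I_k$, this is meant to yield $\ddot f \geq \mu\,\|\delta W\|_F^2$ for some $\mu>0$, uniformly over the feasible region.

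The main obstacle is precisely this uniform positive lower bound on the Hessian. The factor $A^{-1}\mathbf{B}$ can annihilate certain edge directions when $\mathbf{B}$ is rank-deficient, so the quadratic form $\ddot f$ need not be strictly positive on the entire weight space, and the naive estimate degenerates. The careful step is to confine attention to the compact feasible region and track the smallest eigenvalue of the Hessian through the incidence structure; this is where the Loewner bounds $L_e, U_e$ are essential, and where the regularizer $\tfrac{1}{2}h\sum_{e\in\Edges}\|W_e\|^2$ of Remark~\ref{rem:1} can be invoked to guarantee a strictly positive modulus outright. I expect to spend most of the effort here, with the gradient and plain convexity being essentially routine consequences of the differential identity and operator convexity of the inverse.
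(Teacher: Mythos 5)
Your gradient derivation (claim 2) is correct and in substance identical to the paper's appendix proof: the paper phrases the computation as a chain rule through the composition $\alpha\circ\beta\circ\gamma$ with $\alpha(X)=\Tr(\mathbf{e}_s^{\intercal}X\mathbf{e}_s)$, $\beta(X)=X^{-1}$, and $\gamma$ affine in $W_e$, while you perturb $W_e\mapsto W_e+tH$ directly; both routes pass through $d(A^{-1})=-A^{-1}(\mathbf{A}_eH\mathbf{A}_e^{\intercal})A^{-1}$ and cyclicity of the trace, and both identify $Q=\mathbf{A}_e^{\intercal}A^{-1}\mathbf{e}_s=\mathbf{Y}_i^s-\mathbf{Y}_j^s$ as the matrix-valued voltage drop across the edge. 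No difference of substance there.

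On claim 1 your hesitation is the right instinct, and you should know that the paper gives you nothing to match: its appendix proof establishes only the gradient formula and is entirely silent on convexity, strong or otherwise. Your structural argument (affine $W\mapsto A(W)$ composed with the operator-convex inverse, followed by the order-preserving trace) correctly yields plain convexity, and your Frobenius form of the second derivative, $\ddot f=\bigl\|A^{-1/2}(\delta A)A^{-1}\mathbf{B}\bigr\|_F^2$, is correct. But the obstacle you flag is not a technical nuisance to be engineered around --- the claim as literally stated, strong convexity on the cone $\mathcal{S}_{++}^k$, is false. Already for a single scalar edge one has $f(w)=\tfrac{1}{2}w^{-1}$ with $f''(w)=w^{-3}\to 0$ as $w\to\infty$, so no uniform modulus can exist on the unbounded cone; the Loewner bounds $L_e\preceq W_e\preceq U_e$ are therefore not optional refinements but necessary hypotheses. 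Moreover, even on the compact feasible set the Hessian form can degenerate: any structured direction with $(\delta A)A^{-1}\mathbf{B}=0$ kills it, and such directions exist, e.g., two parallel edges between the same endpoints (which TTSP parallel joins of 1-paths produce) with $\delta W_{e_1}=-\delta W_{e_2}$, along which $f$ is exactly flat since it depends only on $W_{e_1}+W_{e_2}$. So a uniform $\mu>0$ requires either a nondegeneracy condition (nonsingular voltage drops $Q_e$ across every edge, with no cancellations in the incidence structure) or the regularizer $\tfrac{h}{2}\sum_{e\in\Edges}\|W_e\|^2$ of Remark~\ref{rem:1}, which makes strong convexity immediate. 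The honest conclusion of your sketch --- convexity unconditionally, strong convexity only on the constrained set with the penalty or a rank condition --- is the correct repair of the proposition, not a failure to reproduce the paper's (nonexistent) argument.
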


We can solve Problem~\eqref{eq:6} using a projected gradient descent algorithm:
\begin{align}
x^{t+1} &= \mathrm{Proj}_{\mathcal{C}}\left( x^t + \dfrac{1}{h\sqrt{t}} \nabla_x f(x^t) \right),
\end{align}
where $\mathcal{C}$ is the cone generated by the constraints of Problem~\eqref{eq:6}.

Following Remark~\ref{rem:1}, let us include a Frobenius norm penalty on the edge weights in the cost: 
\begin{align}
  f(W) &= \frac{1}{2}\Tr\left[\mathbf{B}^{\intercal}  A^{-1}\mathbf{B}\right] + \frac{h}{2}\sum_{e\in\Edges}\Tr\left(W_e^{\intercal} W_e\right)^2.
\end{align}
The gradient of the penalty term with respect to $W_e$ is simply $hW_e$, so
each edge in the network updates its weight according to the dynamics given by the gradient update
\begin{align}
  W_{e}^{t+1} =\mathrm{Proj}_{\mathcal{C}}\left[ \left(1-\frac{1}{\sqrt{t}}\right)W_{e}^t + \dfrac{1}{h\sqrt{t}} \nabla f_{W_{e}}[W_{e}^t]\right].\label{eq:8}
\end{align}

Problem~(\ref{eq:6}) in the scalar edge weight case was examined in \cite{Chapman2015}.
In this case, an efficient projection onto the constraint set is via the $\|\cdot\|_\infty$ norm.
In the case of matrix edge weights, there is no such natural projection.
Instead, at each step one solves the problem 
$
\mathrm{Proj}_{\mathcal{C}}(X) = \min_{Y\in {\mathcal {C}} \cap \mathcal{S}_{++}^n}  \|Y-X\|
$
for some matrix norm $\|\cdot\|$.
However, if $k$ is relatively small compared to the size of the overall network, then this operation is not the most computationally expensive part of the setup.
The complexity in solving Problem~\eqref{eq:6} lies in computing the gradient $\nabla f_{W_e}[H]$, as it requires the voltage drops $\mathbf{Y}_i^s,\mathbf{Y}_j^s$ for each edge $\{i,j\}\in \Edges$.

We now present an algorithm for computing this quantity on all-input two-terminal series-parallel graphs, and a characterization of its complexity.
Informally, the algorithm is as follows.
For each source node $s$, the algorithm utilizes the decomposition tree of $\graph$ with $s$ as the source and the grounded leader set as the sink; this is why $\graph$ needs to be TTSP with respect to all source nodes.
The voltage drops can be computed from resistances and currents across each join, and the currents can be extracted from the power dissipated across each join.
Hence, the effective resistances are computed first, as in Algorithm~\ref{alg:1}.
Starting from the root of the decomposition tree, the currents at each join can be computed by Eq.~\eqref{eq:10}, as in Algorithm~\ref{alg:2} and depicted in Figure~\ref{fig:computation}.
Finally, the voltage drops over each branch are computed starting from the leaves of the decomposition tree, as in Algorithm~\ref{alg:3}.
\begin{figure}
  \centering
  \includegraphics[width=0.7\columnwidth]{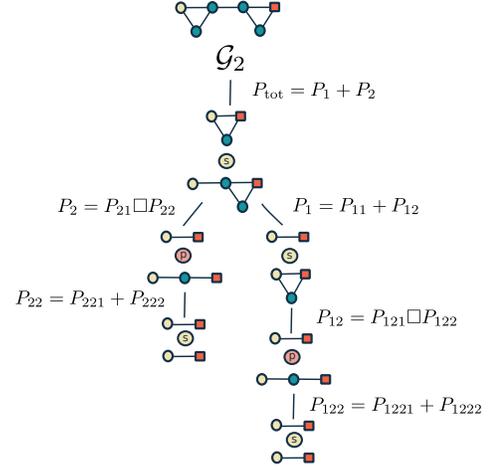}
  \caption{Nested infimal convolution/sum computations over the decomposition tree \& series-parallel graph in Figure~\ref{fig:decompTree}. Beige circles (always on the left) denote sources, red squares (always on the right) denote sinks at each step.}
  \label{fig:computation}
\end{figure}
\removelatexerror
\begin{algorithm}[H]
  \label{alg:1}
  \SetAlgoNoLine
 \caption{Effective Resistance over AITTSP Graph}
\SetAlgoLined
\KwIn{Decomposition tree $\mathcal{T}(\graph)$}
\myinput{Edge weights $\Weights(\graph)$}
\KwResult{ Effective resistances $\rho(\graph)$ over $\mathcal{T}(\graph)$ }
 \For{each leaf of $\mathcal{T}(\graph)$}{
  Output $R_{\text{eff}} = W_e^{-1}$ to parent\;
 }
 \For{each parent $j$ of $\mathcal{T}(\graph)$}{
  \eIf{received $R_{\text{eff}_i}$ from both children $i=1,2$}{
    \eIf{$j$ is a series join}{
       Output $R_{\text{eff}} = R_{\text{eff}_1}+R_{\text{eff}_2} $ to parent\;
    }{
       Output $R_{\text{eff}} = R_{\text{eff}_1}:R_{\text{eff}_2} $ to parent\;
    }
   }{
   wait\;
  }
 }
\end{algorithm}

\removelatexerror
\begin{algorithm}[H]
  \label{alg:2}
 \caption{Branch Currents over AITTSP Graph}
\SetAlgoLined
\KwIn{Decomposition tree $\mathcal{T}(\graph)$}
\myinput{Effective resistances  $\rho(\graph)$}
\KwResult{ Currents $\mathcal{I}(\graph)$ over $\mathcal{T}(\graph)$ }
 \For{each parent $j$ of $\mathcal{T}(\graph)$}{
   \uIf{$j$ is root}{
         \eIf{$j$ is a series join}{
           Output $\mathbf{I}_{\text{out}} = I_k$ to children\;
         }{
           Output $(\mathbf{I}_1,\mathbf{I}_2) = \arg P_1\square P_2$ to children\;
         }
   }
   \uElseIf{received $\mathbf{I}_{\text{in}}$ from parent}{
         \eIf{$j$ is a series join}{
           Output $\mathbf{I}_{\text{out}} = \mathbf{I}_{\text{in}}$ to children\;
         }{
           Output $(\mathbf{I}_1,\mathbf{I}_2) = \arg P_1\square P_2$ to children\;
         }
   }
   \Else{
     wait\;
   }
 }

\end{algorithm}
\removelatexerror
\begin{algorithm}[H]
  \label{alg:3}
 \caption{Voltage Drops over AITTSP Graph}
\SetAlgoLined
\KwIn{Decomposition tree $\mathcal{T}(\graph)$}
\myinput{Effective resistances $\rho(\graph)$ over $\mathcal{T}(\graph)$}
\myinput{Currents $\mathcal{I}(\graph)$ over $\mathcal{T}(\graph)$}
\KwResult{Voltage drops $\mathbf{Y}_i^s$ over $\mathcal{T}(\graph)$ }
 \For{each leaf of $\mathcal{T}(\graph)$}{
  Output $\mathbf{V}_e = W_e^{-1}\mathbf{I}_e$ to parent\;
 }
 \For{each parent $j$ of $\mathcal{T}(\graph)$}{
  \eIf{received $\mathbf{V}_{e_i}$ from both children $i=1,2$}{
    \eIf{$j$ is a series join}{
       Output $\mathbf{V}_{\text{out}} = \mathbf{V}_{\text{in}_1}+\mathbf{V}_{\text{in}_2} $ to parent\;
    }{
       Output $\mathbf{V}_{\text{out}} = \mathbf{V}_{\text{in}_1} = \mathbf{V}_{\text{in}_2} $ to parent\;
    }
   }{
   wait\;
  }
 }
\end{algorithm}

\begin{figure} 
  \centering
  \includegraphics[width=0.7\columnwidth]{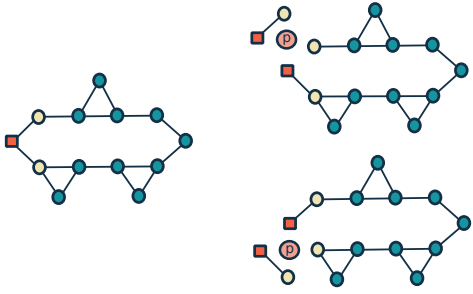}
  \caption{Left: An all-input TTSP graph. Right: Parallel joins across $R$ to $\mathcal{R}$ used to compute $\mathbf{Y}_s^s$.}
  \label{fig:examples} 
\end{figure}

\begin{remark}
Following Theorem~\ref{thr:1}, the  best/worst-case complexity of computing the voltage drops $y_i^s$ in the update scheme of \eqref{eq:8} is $\mathcal{O}(|\mathcal{R}| \log |\Nodes|)$, and  $\mathcal{O}(|\mathcal{R}||\Nodes|)$, respectively.
\end{remark}

\section{Example}
\label{sec:example}
We perform the gradient descent on the TTSP graph in Figure~\ref{fig:eg} with 3 leader nodes, identified to a single node depicted as an orange square.
The weights are assumed to be elements of $\mathcal{S}_{++}^2$, which have 3 independent elements.
These independent elements of each weight $W_{\{i,j\}}$ are represented as a multi-edge, with 3 edges between each node $i$ and its neighbour $j$.
The edges connecting the leader nodes to the source nodes are identity weights, and the remaining weights and bounds were randomly initialized, with weight penalty of $h=0.05$.
The algorithm convergence is shown in Figure~\ref{fig:h2plot}.

{
  In this setup, each node in $\mathcal{G}$ computes one of the independent calculations at each layer of the decomposition tree $\mathcal{T}$.
  This is possible as each layer has at most $\frac{l}{2}$ computations, and $\mathcal{G}$ has at least $\frac{l}{2}+2$ nodes, where $l$ is the number of leaves in the decomposition tree.
  Algorithms~\ref{alg:1}-\ref{alg:3} are then executed in this manner for each $s\in R$, ultimately outputting the voltage drops $\mathbf{Y}_i^s,\mathbf{Y}_j^s$ which are used to compute $\nabla_{W_{\{i,j\}}}f_{W_{\{i.j\}}}$.
  Each edge $\{i,j\}$ is then updated according to~\eqref{eq:8}.
}

\begin{figure}
  \centering
  \includegraphics[width=\columnwidth]{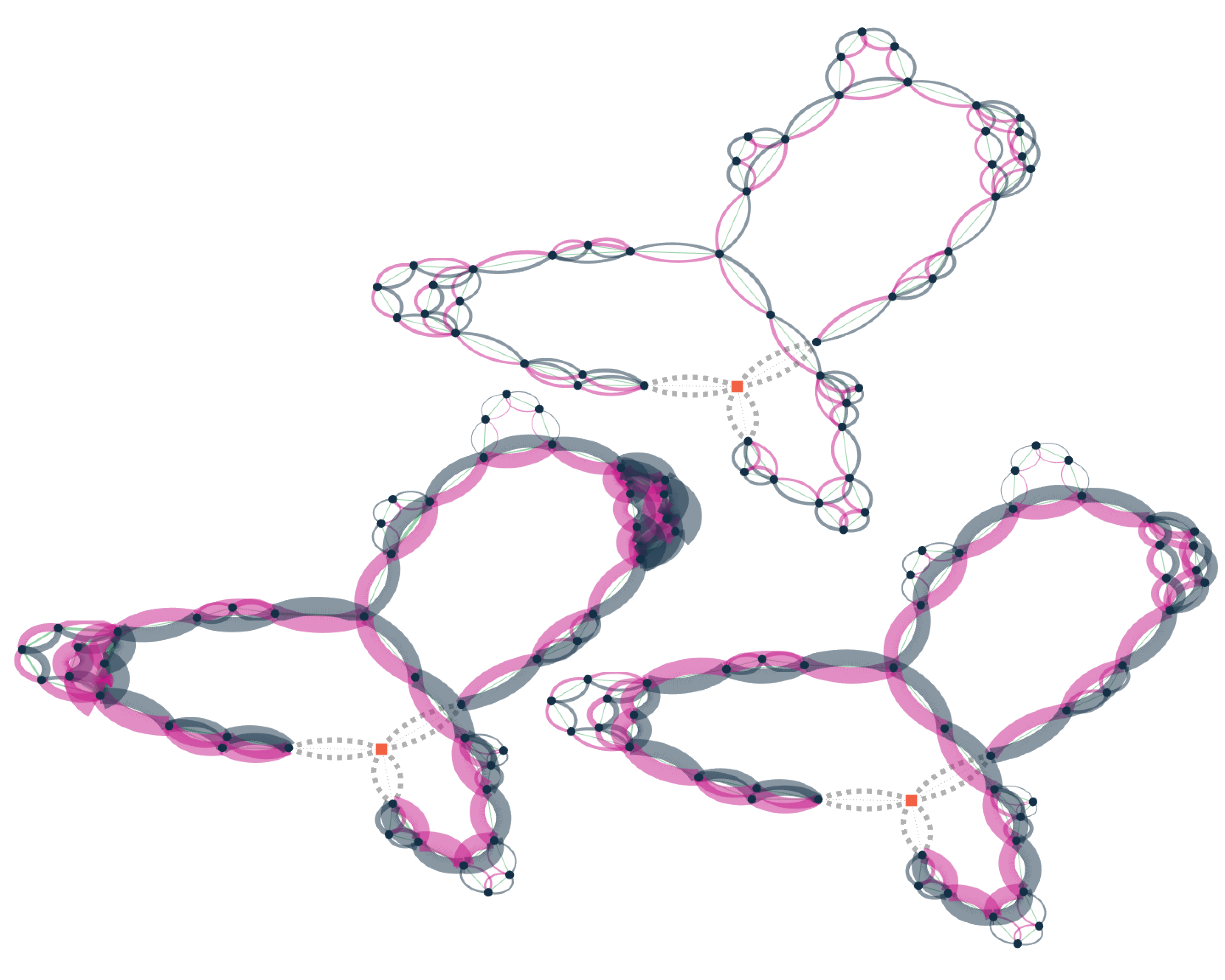}
  \caption{Edge weights at iterations 1 (top), 3 (left), \& 5 (right). Grounded leader node is the orange square. Initial weights exaggerated for clarity.}
  \label{fig:eg}
\end{figure}

\begin{figure}
  \centering
  \includegraphics[width=.9\columnwidth]{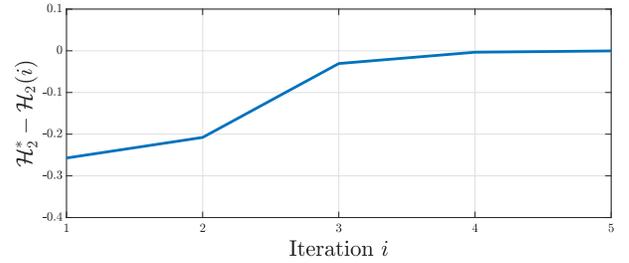}
  \caption{Convergence of gradient descent weight update for optimal $\mathcal{H}_2$ performance on the graphs in Figure~\ref{fig:eg}.}
  \label{fig:h2plot}
\end{figure}

\section{Conclusion}
\label{sec:conclusion}
In this paper we discussed how to utilize the compositional aspects of two-terminal series-parallel graphs to efficiently  compute performance measures on leader-follower consensus networks.
In particular, we have shown that given a decomposition tree of a TTSP graph, one can compute the $\Htwo$ norm, and a gradient update for network reweighting, in best-case and worst-case complexity of $\mathcal{O}(k^\omega|\mathcal{R}| \log |\Nodes|)$ and $\mathcal{O}(k^\omega|\mathcal{R}||\Nodes|)$, respectively.

The authors thank Jingjing Bu and Airlie Chapman for useful conversations.



\bibliographystyle{IEEEtran}
\bibliography{references.bib}

\section*{Appendix}

\textbf{Proof of Proposition~\ref{prop:optim}:} 
In this setup, $ \Htwo^2 $ is given by
\begin{align}
 \dfrac{1}{2} \sum_{s\in R} \Tr \left[ \mathbf{e}_s^\intercal A^{-1} \mathbf{e}_s \right],~
  A 
   = \mathbf{A}_c W_c \mathbf{A}_c^\intercal + \sum_{e\in\Edges \setminus c} \mathbf{A}_e W_e \mathbf{A}_e^\intercal.
\end{align}
We want to find the gradient of the $\Htwo$ norm with respect to $W_c$ for each $c\in\Edges$, and we do this by computing the gradients for each $s \in R$ of $\Tr \left[ \mathbf{e}_s^\intercal A^{-1} \mathbf{e}_s \right]:=(\alpha \circ \beta \circ \gamma) [W_c]$, where,
\begin{align}
  &\alpha(X) = \Tr(\mathbf{e}_s^\intercal X \mathbf{e}_s),~d_{\alpha(X)} [H] = \Tr(\mathbf{e}_s^\intercal H \mathbf{e}_s)\\
  &\beta(X) = X^{-1},~d_{\beta(X)} [H] =  - X^{-1} H X^{-1}\\
  &\gamma(X) = \mathbf{A}_c X \mathbf{A}_c^\intercal + \sum_{e\in\Edges \setminus c} \mathbf{A}_e W_e \mathbf{A}_e^\intercal,~d_{\gamma(X)} [H] = \mathbf{A}_c H \mathbf{A}_c^\intercal\\
  &d(\alpha\circ \beta\circ \gamma) [H] =- \Tr \left[ \mathbf{e}_s^\intercal A^{-1} \mathbf{A}_c H \mathbf{A}_c^\intercal A^{-1} \mathbf{e}_s \right],~A = \gamma(H).
\end{align}
Let $Q = \mathbf{A}_c^\intercal A^{-1} \mathbf{e}_s$.
Then, the gradient at a point $H$ is
\begin{align}
  d(\alpha\circ \beta\circ \gamma) [H] =- \Tr \left[ Q^\intercal HQ \right] = \langle -QQ^\intercal , H \rangle,
\end{align}
and so the gradient at $H$ is $-QQ^\intercal $.
Let  $l = \{i,j\}$. Then, 
\begin{align}
  Q &= \mathbf{A}_l^\intercal A^{-1} \mathbf{e}_s
    = \left(e_i^\intercal \otimes I_k - e_j^\intercal \otimes I_k  \right) A^{-1} \left(e_s \otimes I_k\right).
\end{align}
This is precisely $Q= \blk_i^{k\times k}\left[A^{-1}e_s \right] - \blk_j^{k\times k}\left[A^{-1}e_s \right] := \mathbf{Y}_i^s - \mathbf{Y}_j^s $, or the difference in matrix-valued voltage drops to the grounded leader node from node $i$ to node $j$.
Hence, the gradient with respect to weight $W_c$ is given by 
$
 \nabla_{W_{c}} \Htwo^2[H] = - \frac{1}{2}\sum_{s\in R}\left( \mathbf{Y}_i^s - \mathbf{Y}_j^s \right)\left( \mathbf{Y}_i^s - \mathbf{Y}_j^s \right)^\intercal,
$
where $A^{-1}$ is computed with $H$ in place of $W_{c}$.


\end{document}